\documentclass[11pt]{article}

\usepackage[a4paper,margin=2.8cm]{geometry}
\usepackage{amsmath,amssymb,amsthm,mathtools,esint,mathrsfs}
\usepackage{microtype}
\usepackage{enumitem}
\usepackage{aliascnt}
\usepackage{hyperref}

\hypersetup{
  colorlinks=true,
  linkcolor=magenta,
  citecolor=magenta,
  urlcolor=magenta
}

\newtheorem{theorem}{Theorem}[section]
\newaliascnt{proposition}{theorem}
\newtheorem{proposition}[proposition]{Proposition}
\aliascntresetthe{proposition}
\newaliascnt{lemma}{theorem}
\newtheorem{lemma}[lemma]{Lemma}
\aliascntresetthe{lemma}
\newaliascnt{corollary}{theorem}
\newtheorem{corollary}[corollary]{Corollary}
\aliascntresetthe{corollary}
\theoremstyle{definition}
\newaliascnt{definition}{theorem}
\newtheorem{definition}[definition]{Definition}
\aliascntresetthe{definition}
\newaliascnt{remark}{theorem}
\newtheorem{remark}[remark]{Remark}
\aliascntresetthe{remark}

\numberwithin{equation}{section}

\newcommand{\R}{\mathbb R}

\newcommand{\Hh}{\mathcal H}
\newcommand{\Om}{\Omega}

\newcommand{\one}{\mathbf 1}

\allowdisplaybreaks[2]

\newcommand{\Addresses}{{ 
 \bigskip
 \textit{E-mail address}, C.~Nitsch: \texttt{c.nitsch@unina.it}

  \medskip 
 
 \noindent\textit{E-mail address}, C.~Trombetti: \texttt{cristina@unina.it} 
   \medskip

 \noindent\textsc{Dipartimento di Matematica e Applicazioni ``R. Caccioppoli'', Universit\`a degli studi di Napoli Federico II, Via Cintia, Complesso Universitario Monte S. Angelo, 80126 Napoli, Italy.}
 
 \medskip

 \noindent \textit{E-mail address}, F.~Villone: \texttt{f.villone@ssmeridionale.it} 
  
   \medskip 
 
  \noindent\textsc{Mathematical and Physical Sciences for Advanced Materials and Technologies, Scuola Superiore Meridionale, Largo San Marcellino 10, 80138 Napoli, Italy. }

 \par\nopagebreak 

}}

\title{A Quantitative Robin Faber–Krahn Inequality in Terms of Eigenfunction Distance}
\author{Carlo Nitsch, Cristina Trombetti, Federico Villone}
\date{}

\begin{document}

\maketitle

\begin{abstract}
 We establish a quantitative stability estimate for the first eigenvalue of the Laplacian with positive Robin boundary conditions. For every open set \(\Omega\subset\mathbb R^n\) of prescribed volume, we prove that
\[
\lambda_{1,\beta}(\Omega)-\lambda_{1,\beta}(B)
\geq c\|u_\Omega-u_B\|_{L^2(\mathbb R^n)}^4,
\]
where \(B\) is a ball achieving the Fraenkel asymmetry of \(\Omega\), and \(u_\Omega\) and \(u_B\) are nonnegative, \(L^2\)-normalized first Robin eigenfunctions, extended by zero outside their respective domains. The constant \(c>0\) depends only on the dimension, the Robin parameter, and the prescribed volume. The exponent \(4\) is optimal.

\end{abstract}
\begin{center}
\begin{minipage}{10cm}
\small
\tableofcontents
\end{minipage}
\end{center}
\section{Introduction}\label{sec:introduction}

Let $n\ge 2$ and let $\Omega\subset\R^n$ be a bounded open set with smooth boundary.
Let $\nu_\Omega$ denote the unit outer normal to $\Omega$. Fixing $\beta>0$, we consider
the following eigenvalue problem with Robin boundary conditions:
\begin{equation}\label{eq:intro-robin-strong}
\begin{cases}
-\Delta u_\Omega = \lambda_{1,\beta}(\Omega)u_\Omega& \text{in }\Omega,\\[5pt]
\partial_{\nu_\Omega}u_\Omega+\beta u_\Omega=0
& \text{on }\partial\Omega.
\end{cases}
\end{equation}
The eigenvalue problem \eqref{eq:intro-robin-strong} admits a discrete spectrum,
and its first eigenvalue $\lambda_{1,\beta}(\Omega)$ can be variationally
characterized as
\begin{equation}\label{eq:intro-robin-quotient}
\lambda_{1,\beta}(\Omega)
=
\min_{v\in H^1(\Omega)\setminus\{0\}}
\frac{
\displaystyle\int_\Omega |\nabla v|^2\,dx
+\beta\int_{\partial\Omega}v^2\,d\Hh^{n-1}
}{
\displaystyle\int_\Omega v^2\,dx
}.
\end{equation}
Moreover, the first eigenvalue is simple when $\Omega$ is connected, and an associated eigenfunction
 satisfies
\begin{equation}\label{eq:intro-robin-weak}
\int_\Omega \nabla u_\Omega \cdot\nabla\psi\,dx
+\beta\int_{\partial\Omega}u_\Omega \psi\,d\Hh^{n-1}
=
\lambda_{1,\beta}(\Omega)\int_\Omega u_\Omega\psi\,dx
\end{equation}
for every $\psi\in H^1(\Omega)$.

Throughout the paper, first eigenfunctions are chosen nonnegative and
normalized in $L^2(\Omega)$, namely
\[
\int_\Omega u_\Omega^2\,dx=1.
\]

A natural shape optimization problem for the first eigenvalue is to determine, among all domains of prescribed volume, which shape minimizes the principal eigenvalue. In the Dirichlet case, this question is answered by the classical Faber--Krahn inequality \cite{Faber1923,Krahn1925}, which states that the balls minimize the first eigenvalue under a volume constraint. The corresponding result for the Robin problem is the Bossel--Daners inequality. For positive Robin parameter, the planar case for smooth domains was proved by Bossel in \cite{Bossel1986,Bossel1988}, while Daners established the result for bounded Lipschitz domains in arbitrary dimension in \cite{Daners2006}. More precisely, if $B$ is a ball such that $|B|=|\Omega|$, then
\begin{equation}\label{eq:intro-bossel-daners}
    \lambda_{1,\beta}(\Omega)
    \geq
    \lambda_{1,\beta}(B).
\end{equation}
 For sufficiently smooth domains, equality holds only for balls, up to translations \cite{DanersKennedy2007}. 

For our purposes, it will be necessary to work with the first Robin eigenvalue on arbitrary open sets of finite measure. We therefore adopt the $SBV^{1/2}$ relaxation introduced in \cite{BucurGiacomini2010,BucurGiacomini2015}, which coincides with the variational characterization \eqref{eq:intro-robin-quotient} whenever $\Omega$ is a bounded Lipschitz set. The precise definition and the corresponding variational framework are recalled in \autoref{sec:setting}. Variational proofs of \eqref{eq:intro-bossel-daners} under these weaker regularity assumptions were subsequently obtained in \cite{BucurDaners2010,BucurGiacomini2010,BucurGiacomini2015}.

Since equality in \eqref{eq:intro-bossel-daners} is attained only by balls, up to translations, it is natural to ask for a quantitative version of this rigidity statement. Thus, one would like to understand whether an almost optimal domain, in the sense that its first Robin eigenvalue is close to that of the ball with the same volume, must itself be close to a ball. In other words, the goal of a quantitative stability estimate is to control a suitable  distance from $\Omega$ to the family of balls in terms of the eigenvalue deficit

$$
\lambda_{1,\beta}(\Omega)-\lambda_{1,\beta}(B_R),
\qquad |B_R|=|\Omega|.
$$
A natural way to measure this geometric distance is through
the Fraenkel asymmetry, recalled below. 

\begin{definition}\label{def:fraenkel-ball}
Let $\Omega\subset\mathbb R^n$ be a measurable set with
$0<|\Omega|<\infty$, and let $B_R$ be a ball such that
$|B_R|=|\Omega|$.
The Fraenkel asymmetry of $\Omega$ is defined by
\begin{equation}\label{eq:intro-fraenkel}
\mathcal A(\Omega)
:=\inf_{x\in\mathbb R^n}
\frac{|\Omega\triangle B_R(x)|}{|\Omega|}.
\end{equation}
We say that a ball $B$ achieves the Fraenkel asymmetry
of $\Omega$ if
\[
|B|=|\Omega|,
\qquad
\mathcal A(\Omega)
=\frac{|\Omega\triangle B|}{|\Omega|}.
\]
Such a ball always exists, although it may not be unique.
\end{definition}

In the Dirichlet setting, quantitative stability estimates developed from the first bounds in \cite{Melas1992,FMP2009} to the sharp inequality \cite{BDV2015}

$$
 \lambda_1^D(\Omega)-\lambda_1^D(B_R)
 \ge c(n,V)\,\mathcal A(\Omega)^2.
$$

The quadratic exponent is optimal. Its proof uses the selection principle introduced in \cite{CicaleseLeonardi2012}, and the Kohler--Jobin relation with torsional rigidity \cite{KohlerJobin1978,KohlerJobin1982}. See also \cite{BrascoDePhilippis2017} for a survey on quantitative spectral inequalities.

In the Robin setting, the sharp quantitative counterpart of the Bossel--Daners inequality was established in \cite{BucurFeroneNitschTrombetti2018}. In the relaxed framework introduced above, it states that
\begin{equation}\label{eq:intro-qfk}
\delta_\beta(\Omega)
:=\lambda_{1,\beta}(\Omega)-\lambda_{1,\beta}(B_R)
\ge c(n,\beta,V)\,\mathcal A(\Omega)^2,
\qquad |\Omega|=|B_R|=V.
\end{equation}
As in the Dirichlet case, the exponent $2$ is sharp. The proof, however, follows a substantially different route. In particular, no Robin analogue of the Kohler--Jobin inequality is currently known, so the argument cannot be reduced to a quantitative estimate for an auxiliary torsional functional.  Related methods for Robin type problems were developed in \cite{BGN2021}.

Another possible approach to quantitative spectral inequalities is based on Talenti's principle \cite{Talenti1976,AlvinoNitschTrombetti2023} and its quantitative counterpart \cite{AmatoBarbatoMasielloPaoli2024,AmatoBarbatoCitoMasielloPaoli2026,AL26}. In particular, in dimension two, the authors of \cite{AmatoBarbatoCitoMasielloPaoli2026} provide an alternative proof of the sharp quantitative Bossel–Daners inequality \eqref{eq:intro-qfk}.



Although the Fraenkel asymmetry is a natural geometric quantity in the
quantitative Bossel--Daners inequality, it does not capture all the
information that is relevant for the Robin problem. Indeed, it only measures
the distance between sets in measure, whereas the Robin eigenvalue is also
sensitive to the creation of new portions of boundary. This discrepancy can
already be seen within the class of Lipschitz domains. For instance, starting
from a ball $B$, one may remove a thin slab crossing the ball, and consider
\[
    \Omega_\varepsilon
    =
    B\setminus\{ |x_1|<\varepsilon\}.
\]
 Then
$\mathcal A(\Omega_\varepsilon)\to0$ as $\varepsilon\to0$, since the removed
region has vanishing volume. On the other hand, in the limit the spectral problem
does not reduce to the Robin problem on the original ball. Thus, closeness in
Fraenkel asymmetry alone does not necessarily reflect closeness of the
corresponding eigenvalues.

This suggests that the Fraenkel asymmetry can be complemented by an
additional notion of distance, capable of detecting perturbations that are
small in measure but remain visible to the underlying PDE. 

The same phenomenon occurs in the Dirichlet setting, where modifications on
sets of zero measure but positive capacity may change the spectrum while
remaining invisible to the Fraenkel asymmetry. This idea is developed in
\cite{AKN2023ACF,AKN2023Linear}, where it is proved that, for a suitable
equal-volume ball $B$ and suitably normalized first eigenfunctions extended
by zero to $\mathbb R^n$,
\begin{equation}\label{eq:intro-akn}
 \lambda_1^D(\Omega)-\lambda_1^D(B)
 \geq
 c\left(
 |\Omega\triangle B|^2
 +
 \|u_\Omega-u_B\|_{L^2(\mathbb R^n)}^2
 \right).
\end{equation}
Hence the spectral deficit controls not only the distance of the domain from
a ball in measure, but also the distance between the corresponding first
eigenfunctions, which detects information that is not contained in the
Fraenkel asymmetry.

This point of view is further developed in \cite{AKN}, where the comparison
of first eigenfunctions is strengthened by controlling the difference between
the corresponding Dirichlet resolvents.

In this paper, we investigate the
analogous question for the first Robin eigenvalue, asking whether the Bossel--Daners deficit controls, in addition to the Fraenkel asymmetry, a suitable
distance between the corresponding first eigenfunctions.

More precisely, we have the following result.
\begin{theorem}\label{thm:main}
Let $n\geq 2$, $\beta>0$, and $V>0$.
There exists a constant $C=C(n,\beta,V)>0$
such that the following holds for every open set
$\Omega\subset\mathbb R^n$ with $|\Omega|=V$.

Let $B=B_R(x_\Omega)$ be any ball achieving the
Fraenkel asymmetry of $\Omega$, in the sense of \autoref{def:fraenkel-ball}.
Then, for every nonnegative, $L^2$-normalized
first Robin eigenfunction $u_\Omega$ on $\Omega$,
\begin{equation}
\label{eq:main-estimate}
\|u_\Omega-u_B\|_{L^2(\mathbb R^n)}
\leq C\,\delta_\beta(\Omega)^{1/4},
\end{equation}
where $u_B$ is the nonnegative, $L^2$-normalized
first Robin eigenfunction of $B$.
Both eigenfunctions are extended by zero outside
their respective domains.

The exponent $1/4$ is optimal, even within
the class of smooth connected domains.
\end{theorem}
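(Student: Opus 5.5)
We combine the sharp quantitative Bossel--Daners inequality \eqref{eq:intro-qfk} with a perturbative argument on the ball. First we reduce to small deficit. Since $\|u_\Omega-u_B\|_{L^2(\R^n)}\le 2$ always, the estimate is trivial whenever $\delta_\beta(\Omega)\ge\delta_0$, for some fixed $\delta_0=\delta_0(n,\beta,V)$. So we assume $\delta_\beta(\Omega)$ is small. Then \eqref{eq:intro-qfk} gives $|\Omega\triangle B|\le C\delta_\beta(\Omega)^{1/2}$, where $B$ is any optimal ball for the Fraenkel asymmetry.

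The core step is an energy comparison in the relaxed $SBV^{1/2}$ framework. We test the Robin quotient of $\Omega$ against the functions $u_\Omega$ and $u_B$ extended by zero. The relaxed functional $\mathcal F(v)=\int|\nabla v|^2+\beta\int_{J_v}(\bar v^2+\underline v^2)\,d\Hh^{n-1}$ has minimum $\lambda_{1,\beta}(B)$ over functions supported in $B$ with unit $L^2$ norm. We use the spectral gap of the ball: for $w$ supported in $\overline B$ and $L^2$-orthogonal to $u_B$,
\[
\mathcal F(w)\ge \lambda_{2,\beta}(B)\|w\|_{L^2}^2,\qquad \lambda_{2,\beta}(B)>\lambda_{1,\beta}(B).
\]
We localize $u_\Omega$ to the ball by writing $u_\Omega=u_\Omega\one_B+u_\Omega\one_{\Omega\setminus B}$. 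The boundary term can only be enlarged by cutting along $\partial B$ up to a controlled error, and this needs care. The natural route is to invoke the proof mechanism of \cite{BucurFeroneNitschTrombetti2018}. That argument produces, from $\delta_\beta$ small, a control of the ``mass outside $B$'' $\int_{\Omega\setminus B}u_\Omega^2$ by $C\delta_\beta^{1/2}$, which comes from the $L^\infty$ bound on $u_\Omega$ combined with the measure estimate. Write $u_\Omega\one_B=a u_B+w$ with $w\perp u_B$. Then
\[
\mathcal F(u_\Omega\one_B)\le \lambda_{1,\beta}(\Omega)+E,
\]
where $E$ is the error introduced by truncation on $\partial B\cap\Omega$. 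The spectral gap then gives
\[
(\lambda_{2,\beta}(B)-\lambda_{1,\beta}(B))\|w\|^2\le \delta_\beta+E+\lambda_{1,\beta}(B)\,(1-\|u_\Omega\one_B\|^2).
\]

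Finally we assemble the pieces:
\[
\|u_\Omega-u_B\|^2\le \|w\|^2+(1-a)^2+\int_{\Omega\setminus B}u_\Omega^2 .
\]
Here $1-a$ is controlled by $\|w\|^2$ and the outside mass, using nonnegativity ($a\ge0$) and the normalization. Each term is bounded by $C\delta_\beta^{1/2}$, and this yields \eqref{eq:main-estimate}. We expect the main obstacle to be the truncation error $E$. Cutting $u_\Omega$ along $\partial B\cap\Omega$ creates a new jump set whose Robin cost $\beta\int_{\partial B\cap\Omega}u_\Omega^2$ is not obviously small. To handle it, we will first use a uniform $L^\infty$ bound and a lower bound on $u_\Omega$ (available in the relaxed setting from Bucur--Giacomini). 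Then, instead of cutting exactly on $\partial B$, we will average over concentric spheres $\partial B_r$ with $r\in(R-t,R+t)$. The coarea formula bounds the average boundary cost by $t^{-1}\int_{\Omega\triangle B}u_\Omega^2$. Optimizing $t$ against the resulting volume change costs a square root, and this is plausibly the source of the exponent $1/4$ rather than $1/2$.

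For optimality we take smooth connected perturbations of the ball, for instance a ball with a thin inward neck or a small attached bump of size $\varepsilon$. For such domains $\delta_\beta\sim\varepsilon^2$ while $\|u_\Omega-u_B\|_{L^2}\sim\varepsilon^{1/2}$, since the eigenfunction mass in the perturbed region is proportional to its volume $\varepsilon$ and $u_B$ is bounded below on $\partial B$. This shows that the exponent $1/4$ cannot be improved.
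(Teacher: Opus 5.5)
\textbf{The gap: the spectral-gap step for $v=u_\Omega\one_B$.} The bound $\mathcal F(au_B+w)\ge\lambda_Ba^2+\lambda_{2,\beta}(B)\|w\|_2^2$ for $w\perp u_B$ is a statement about the Robin form $a_B$ on $H^1(B)$. It rests on $a_B(u_B,w)=\lambda_B\int_Bu_Bw\,dx=0$. But $v$ is in general not in $H^1(B)$: it jumps on $J_{u_\Omega}\cap B$, which contains the part of $\partial\Omega$ lying inside $B$.

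On such functions $\mathcal F$ is not a fixed quadratic form. The jump energy of $au_B+w$ differs from that of $w$ by cross terms $2a\beta\int_{J_w}u_B(w^++w^-)\,d\Hh^{n-1}$. Integrating $\int_B\nabla u_B\cdot\nabla w\,dx$ by parts also leaves jump terms. So the inequality you need, $\mathcal F(v)-\lambda_B\|v\|_2^2\ge(\lambda_{2,\beta}(B)-\lambda_B)\|w\|_2^2$, is false in general. Take $v=u_B\one_{B_{R-d}}$: it is nonnegative, supported in $\overline B$, and jumps only on $\partial B_{R-d}$. Integrating by parts with $\phi'=-q\phi$ gives $\mathcal F(v)-\lambda_B\|v\|_2^2=(\beta-q(R-d))\phi(R-d)^2|\partial B_{R-d}|=q'(R)\mu_d+O(d^2)$, where $\mu_d:=\int_{B\setminus B_{R-d}}u_B^2\,dx\sim d$. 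On the other hand, $\|w\|_2^2=\mu_d(1-\mu_d)$.

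The same numbers, up to $O(d^2)$, arise for the genuine eigenfunction of $\Omega=B_{R-d}\cup(B_{R'}\setminus\overline{B_{R-d+\varepsilon}})$ with $|\Omega|=V$ and $\varepsilon\ll d^2$. There $u_\Omega=u_{B_{R-d}}$, $E=0$, and there is no outside mass. Your inequality would therefore force $\lambda_{2,\beta}(B)-\lambda_B\le q'(R)=\lambda_B+\beta^2-(n-1)\beta/R$. This fails for small $\beta$: the right-hand side tends to $0$, while the left-hand side tends to the first nonzero Neumann eigenvalue of $B$. Interior jumps near $\partial B$ are cheap, so any valid coercivity must degenerate at $\partial B$. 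A Hilbert-space spectral gap cannot provide that.

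\textbf{What is missing, and how the paper supplies it.} You need a degenerate control of interior jumps, plus a Poincar\'e inequality that tolerates the degeneracy. The paper writes $f_\Omega=u_\Omega/\bar\phi$ and uses the calibration $b=\nabla\log\bar\phi$, with $\bar\phi$ continued by $\phi(R)e^{-\beta(|x|-R)}$ outside $B$. Integrating $b\cdot\nabla(u^2)$ by parts in $SBV$ gives the exact identity \eqref{eq:groundstate}. Three facts then give \eqref{eq:coercivity-f}: $-\operatorname{div}b-|b|^2=\lambda_B$ in $B$, $-\operatorname{div}b-|b|^2\ge-\beta^2$ in $B^c$, and $\beta-|b|\ge c_0\operatorname{dist}(x,\partial B)$. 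Finally, a weighted $L^1$-Poincar\'e inequality in $BV$ (Drelichman--Dur\'an) is applied to $(h-a)|h-a|$, with $h=\max\{f_\Omega,\theta\}$ and $a$ a median of $h$. This yields $\int_B|f_\Omega-\langle f_\Omega\rangle_B|^2\le C\delta_\beta(\Omega)^{1/2}$.

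\textbf{Secondary points.}
\begin{itemize}
\item The same calibration, applied to $u_\Omega\one_{B^c}$, settles your truncation error directly: $E\le\beta^2\int_{B^c}u_\Omega^2\le C\delta_\beta(\Omega)^{1/2}$.
\item Your coarea bound is not correct as stated. The shells $\partial B_r$ with $r<R$ lie inside $B$, so coarea only bounds the average cut cost by $t^{-1}\int_{\Omega\cap(B_{R+t}\setminus B_{R-t})}u_\Omega^2$, which is of order one when $\Omega$ is close to $B$. Averaging only over $r>R$ and comparing with $B_{R+t}$ gives an error of order $t+t^{-1}\delta_\beta^{1/2}$, hence at best the exponent $1/8$. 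The exponent $1/4$ does not come from such an optimization; it is just the square root of a $\delta_\beta^{1/2}$ bound on $\|w\|_2^2$.
\item A pointwise lower bound on $u_\Omega$ is not available for arbitrary $\Omega$. Bucur--Giacomini's nondegeneracy concerns minimizers of the free-discontinuity problem.
\item Your sharpness argument works if you use a smooth volume-preserving deformation of size $\varepsilon$, such as the ellipsoids $T_tB_R$. You must still check that $\delta_\beta=O(\varepsilon^2)$, which holds because the ball is critical, and that $\mathcal A\gtrsim\varepsilon$ for the optimal ball. Then conclude via $\|u_\Omega-u_B\|_2^2\ge m_B^2|B\setminus\Omega|$.
\end{itemize}
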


We point out that \autoref{thm:main} already includes the geometric asymmetry. Indeed, there exists a constant $m_B$ depending only on $n, V$ and $\beta$ such that 
(we defer to \eqref{eq:phi-bounds} for the details), the first Robin eigenfunction of the ball satisfies
\(u_B\ge m_B>0\) in \(B\). Since \(u_\Omega\) vanishes almost everywhere outside
\(\Omega\) and \(|\Omega|=|B|=V\), we have
\begin{equation}\label{eq:intro-asymmetry-control}
 \|u_\Omega-u_B\|_{L^2(\mathbb R^n)}^2
 \ge \int_{B\setminus\Omega}u_B^2\,dx
 \ge m_B^2|B\setminus\Omega|
 =\frac{m_B^2V}{2}\mathcal A(\Omega).
\end{equation}
Consequently, after changing the constant, \autoref{thm:main} can
equivalently be written as
\begin{equation}\label{eq:main-combined}
 \delta_\beta(\Omega)
 \ge c(n,\beta,V)\left(
 \mathcal A(\Omega)^2
 +\|u_\Omega-u_B\|_{L^2(\mathbb R^n)}^4
 \right).
\end{equation}
 We stress that our proof uses
\eqref{eq:intro-qfk}; this is therefore a strengthening of that estimate,
rather than an independent proof of it.

It is worth emphasizing that the proof of \autoref{thm:main} cannot be obtained by a direct adaptation of the Dirichlet argument in \cite{AKN2023ACF}. A first substantial obstruction is that the Dirichlet proof relies on the Kohler--Jobin inequality, which provides a crucial link between the spectral deficit and the torsional rigidity. No analogous inequality suitable for this purpose is currently known for the Robin problem, so this part of the argument has no immediate counterpart in the present setting.

There is also a second, more structural difficulty. The Dirichlet approach makes essential use of regularity properties and of a local analysis around the ball based on perturbed free boundary problems. These arguments are closely tied to the Dirichlet structure and do not transfer directly to the Robin case, where the boundary contribution enters the energy explicitly. In particular,  the regularity theory becomes substantially more delicate.

For these reasons, the proof of \autoref{thm:main} follows a different route, tailored to the Robin problem, rather than reproducing the Dirichlet strategy.

\medskip

Moreover, we establish a stronger stability estimate for smooth
perturbations of the ball. To state this result, we introduce the
class of nearly spherical domains.

Let $B=B_R(0)$ be the ball of volume $V$. For every
$\xi\in C^0(\partial B)$ with
$\|\xi\|_{L^\infty(\partial B)}<R/2$, we denote by $B^\xi$
the bounded domain whose boundary is given by
\[
\partial B^\xi
=
\{x+\xi(x)\nu_B(x):x\in\partial B\},
\]
where $\nu_B$ denotes the outward unit normal to $\partial B$.
Following \cite{DL} we define the class
of nearly spherical domains as follows. 
\begin{definition}\label{def:nearly-spherical}
Let $B=B_R(0)$ be the ball of volume $V$.
For $\eta\in(0,R/2)$ and $\gamma\in(0,1)$,
we define the class of nearly spherical domains by
\begin{equation}\label{eq:nearly-spherical}
\mathcal S_{\eta,\gamma}(B)
:=
\left\{
B^\xi:
\begin{array}{l}
\xi\in C^{2,\gamma}(\partial B),\quad 
\|\xi\|_{C^{2,\gamma}(\partial B)}\leq\eta,\\
|B^\xi|=|B|,\quad
\displaystyle\int_{B^\xi}x\,dx=0
\end{array}
\right\}.
\end{equation}
\end{definition}

For nearly spherical domains, we can compare the eigenfunctions
by identifying the perturbed domain with the reference ball.
More precisely, let $h_\xi$ be the harmonic extension of
$\xi/R$ to $B$ and define
\[
\Psi_\xi(x):=(1+h_\xi(x))x.
\]
We denote by $u_\xi$ and $u_B$ the positive, $L^2$-normalized
first Robin eigenfunctions of $B^\xi$ and $B$, respectively.
We prove the following result, whose proof is given in
\autoref{sec:nearly-spherical}.

\begin{theorem}\label{thm:nearly-spherical}
Let $n\geq 2$, $\beta,V>0$, and $\gamma\in(0,1)$.
Let $B=B_R(0)$ be the ball of volume $V$.

There exist constants
$\eta_0=\eta_0(n,\beta,V,\gamma)>0$ and
$C=C(n,\beta,V,\gamma)>0$ such that,
for every $B^\xi\in\mathcal S_{\eta_0,\gamma}(B)$,
the map $\Psi_\xi$ is a diffeomorphism from
$B$ onto $B^\xi$, and
\begin{equation}\label{eq:local-stability}
\begin{aligned}
&\|u_\xi\circ\Psi_\xi-u_B\|_{H^1(B)}^2\quad+
\|u_\xi-u_B\|_{H^1(B\cap B^\xi)}^2
\leq
C\bigl(\lambda_{1,\beta}(B^\xi)
-\lambda_{1,\beta}(B)\bigr),
\end{aligned}
\end{equation}
where $u_\xi$ and $u_B$ are the positive,
$L^2$-normalized first Robin eigenfunctions
of $B^\xi$ and $B$, respectively.
\end{theorem}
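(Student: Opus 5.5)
The plan is to transport the eigenvalue problem on $B^\xi$ back to $B$ through $\Psi_\xi$ and use a spectral-gap (coercivity) argument on the fixed ball. First I check that $\Psi_\xi$ is a diffeomorphism for small $\eta_0$. By Schauder estimates, $\|h_\xi\|_{C^{2,\gamma}(\overline B)}\le C\|\xi\|_{C^{2,\gamma}(\partial B)}/R$. Hence $D\Psi_\xi=(1+h_\xi)I+x\otimes\nabla h_\xi$ is a small $C^{1,\gamma}$ perturbation of the identity. On $\partial B$ we have $\Psi_\xi(x)=x+\xi(x)\nu_B(x)$, so $\Psi_\xi$ maps $B$ onto $B^\xi$. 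Setting $v:=u_\xi\circ\Psi_\xi$, I would rewrite the Rayleigh quotient of $B^\xi$ as a quotient on $B$:
\[
\frac{\int_B A_\xi\nabla v\cdot\nabla v\,dx+\beta\int_{\partial B}v^2 J^\partial_\xi\,d\Hh^{n-1}}{\int_B v^2 J_\xi\,dx}.
\]
Here $A_\xi=J_\xi D\Psi_\xi^{-1}D\Psi_\xi^{-T}$, and the Jacobians satisfy $\|A_\xi-I\|_{C^{1,\gamma}}+\|J_\xi-1\|+\|J^\partial_\xi-1\|\le C\eta$.

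The core step compares with $u_B$ through the quadratic form $Q_B(w)=\int_B|\nabla w|^2+\beta\int_{\partial B}w^2-\lambda_{1,\beta}(B)\int_B w^2$. Because $\lambda_{1,\beta}(B)$ is simple with an isolated spectrum, $Q_B(w)\ge c\|w\|_{H^1(B)}^2$ for $w\perp u_B$ in $L^2(B)$; the $H^1$ control follows by interpolating with the gap $\lambda_2-\lambda_1$. Decompose $v=a u_B+w$ with $w\perp u_B$. The naive bound $Q_B(v)\le C\eta\|v\|^2$ is too weak, since the deficit is of order $\|\xi\|^2$, not $\eta$. So a linear-in-$\xi$ expansion is needed instead. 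I would show
\[
\delta_\beta(B^\xi)=Q_B(w)+\mathcal L(\xi;w)+O\big(\|\xi\|_{H^{1/2}}^2\big)\cdot(\ldots),
\]
where the first variation of $\lambda_{1,\beta}$ at $B$ vanishes, because the barycenter and volume constraints together with criticality of the ball kill the linear term. More concretely, I plan to use the known second-order expansion from \cite{BucurFeroneNitschTrombetti2018}-type computations: $\delta_\beta(B^\xi)\ge c\|\xi\|_{H^{1/2}(\partial B)}^2$ for nearly spherical sets (a Fuglede-type estimate). This may instead require $\|\xi\|_{L^2}^2$, which I would combine with elliptic regularity.

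Next I would control $v-u_B$ by $\xi$. The function $v$ solves a uniformly elliptic Robin problem on $B$ whose coefficients differ from the ball's by $O(\xi)$ in $C^{1,\gamma}$. Differentiating the map $\xi\mapsto v$ (implicit function theorem at the simple eigenvalue), I get $\|v-u_B\|_{H^1(B)}\le C\|\xi\|_{X}$, where $X$ is the weakest norm in which the linearized map is bounded. Here the main obstacle appears: one needs $X$ weak enough, like $H^{1/2}(\partial B)$ or $L^2(\partial B)$, to match the Fuglede lower bound $\delta_\beta\ge c\|\xi\|_X^2$. For the Robin linearization, the shape derivative $u'$ solves the Helmholtz equation with boundary datum involving $\xi$ and its tangential derivatives, $u'\sim \operatorname{div}_{\partial B}(\xi\nabla_\tau u_B)+\ldots$. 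Since $u_B$ is radial, $\nabla_\tau u_B=0$ and the datum is just a multiple of $\xi$. This gives $\|u'\|_{H^1(B)}\le C\|\xi\|_{H^{-1/2}}\le C\|\xi\|_{L^2}$, and the second-order remainder is bounded by $C\eta\|\xi\|_{L^2}$.

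Finally, I would combine these into $\|v-u_B\|_{H^1(B)}^2\le C\|\xi\|_{L^2(\partial B)}^2\le C\delta_\beta(B^\xi)$. For the second term on $B\cap B^\xi$, write
\[
u_\xi-u_B=(v-u_B)\circ\Psi_\xi^{-1}+\big(u_B\circ\Psi_\xi^{-1}-u_B\big).
\]
The first piece is controlled by the change of variables. For the second, since $u_B$ is smooth and radial and $|\Psi_\xi^{-1}(x)-x|\le C|h_\xi|$, I obtain $\|u_B\circ\Psi_\xi^{-1}-u_B\|_{H^1}\le C\|h_\xi\|_{H^1(B)}\le C\|\xi\|_{H^{1/2}(\partial B)}$. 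This forces the Fuglede estimate to be in the $H^{1/2}$ norm, which I expect holds for Robin because of the surface-tension term, and which I would verify by a spherical-harmonics expansion of the second variation.
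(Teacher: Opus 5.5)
Your skeleton (pull back by $\Psi_\xi$, split $\widehat u_\xi=\alpha_\xi u_B+v_\xi$ with $v_\xi\perp u_B$ in $L^2(B)$, use $\lambda_{2,\beta}(B)>\lambda_B$, then compare on $B\cap B^\xi$ via $|\Psi_\xi(x)-x|\le C|h_\xi(x)|$) is the paper's. There is, however, a genuine gap at exactly the step you flag as the main obstacle. You never secure a lower bound for the deficit in a norm matching the one that controls $\widehat u_\xi-u_B$.

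Your sources do not supply it. \cite{BucurFeroneNitschTrombetti2018} only gives $\delta_\beta\ge c\,\mathcal A^2$, and $\mathcal A(B^\xi)\lesssim\|\xi\|_{L^1(\partial B)}$, which is useless here. A spherical-harmonics check that the second variation at $B$ is positive is not enough either. Passing from that to $\delta_\beta(B^\xi)\ge c\|\xi\|^2$ uniformly on a $C^{2,\gamma}$-neighbourhood is the content of the Dambrine--Lamboley theorem \cite{DL}. That theorem needs an improved-continuity estimate for the second shape derivative in the same norm in which it is coercive.

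The missing idea is that the Robin boundary term $\beta\int_{\partial\Omega}u^2$ acts as a weighted perimeter. The second variation at the ball therefore has, as its leading high-frequency part, a positive multiple of $\beta\phi(R)^2\int_{\partial B}|\nabla_\tau\xi|^2$. Hence the deficit is coercive in the \emph{strong} norm $H^1(\partial B)$ (\autoref{lem:local-coercivity}, obtained from \cite{DL,BW,CV}). With this, there is no norm-matching problem and no linearization is needed. Test the pulled-back equation with $v_\xi$ and use $\|A_\xi-I\|_{L^2(B)}+\|J_\xi-1\|_{L^2(B)}+\|J^\tau_\xi-1\|_{L^2(\partial B)}\le C\|\xi\|_{H^1(\partial B)}$ together with the uniform $C^{2,\gamma}$ bound on $\widehat u_\xi$. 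This gives $\|\widehat u_\xi-u_B\|_{H^1(B)}\le C\|\xi\|_{H^1(\partial B)}$ directly, and the bound on $B\cap B^\xi$ then follows pointwise.

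The weak-norm linearization you propose instead also contains false claims. The derivative of $\xi\mapsto u_\xi\circ\Psi_\xi$ is the material derivative $\dot u=u'+V\cdot\nabla u_B$ with $V=h_\xi x$, not the shape derivative $u'$. Take $\xi=aY_k$ with $Y_k$ a degree-$k$ spherical harmonic. Then $\|u'\|_{H^1(B)}\lesssim ak^{-1/2}$ is indeed fine. But $\|V\cdot\nabla u_B\|_{H^1(B)}\sim ak^{1/2}$, because $r\phi'(r)\approx-R\beta\phi(R)\neq0$ in the boundary layer where $h_\xi$ lives. So $\|\widehat u_\xi-u_B\|_{H^1(B)}\le C\|\xi\|_{L^2(\partial B)}$ is false.

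Likewise the remainder is not $O(\eta\|\xi\|_{L^2})$. $J^\tau_\xi$ contains $|\nabla_\tau\xi|^2$, and $A_\xi$ contains products of $\nabla h_\xi$. Tested against $H^1$ functions these are at best $O(\eta\|\xi\|_{H^{1/2}})$, and even that needs an extra argument (e.g.\ that multiplication by $\nabla_\tau\xi\in C^1$ is bounded on $H^{-1/2}(\partial B)$).

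So your scheme could at most close at the $H^{1/2}$ level. Even then it needs the $H^{1/2}$ coercivity you only expect, plus a nonlinear $H^{1/2}$ perturbation bound (uniform derivative estimates along $t\xi$) that you do not carry out. The former is in fact available only as a consequence of the $H^1$ coercivity, at which point the linearization is unnecessary.
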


The estimate provides two forms of \(H^1\)-stability for the first Robin eigenfunctions: one after transporting \(u_\xi\) to the reference ball through \(\Psi_\xi\), and the other by comparing the eigenfunctions directly on \(B\cap B^\xi\).  In particular,
the latter estimate gives an improved exponent $1/2$ for the comparison on $B\cap B^\xi$.

\section{Notation and preliminary tools}
\label{sec:preliminaries}

In this section we introduce the variational framework used throughout the paper and fix the notation needed in the proof of the main result. We also collect the quantitative estimates that will be used later and study the radial first eigenfunction of the comparison ball. 
\subsection{The Robin eigenvalue on arbitrary open sets}
\label{sec:setting}

As anticipated in the introduction, our arguments require a notion of first Robin eigenvalue defined for arbitrary open sets of finite measure. We use the free-discontinuity relaxation introduced in
\cite{BucurGiacomini2010,BucurGiacomini2015}, following the notation of
\cite{AmbrosioFuscoPallara2000}.

A nonnegative measurable function $u$ belongs to
$SBV^{1/2}(\mathbb R^n)$ if

$$
 u\in L^2(\mathbb R^n),
 \qquad
 u^2\in SBV(\mathbb R^n).
$$

For $u\in SBV^{1/2}(\mathbb R^n)$, we set

$$
 J_u:=J_{u^2},
 \qquad
 u^\pm:=\sqrt{(u^2)^\pm},
 \qquad
 \nabla u:=\frac{\nabla(u^2)}{2u}\,\one_{\{u>0\}},
$$

and define the relaxed Robin energy by
\begin{equation}\label{eq:relaxed-energy}
\mathcal E_\beta(u)
:=
\int_{\mathbb R^n}|\nabla u|^2\,dx
+\beta\int_{J_u}
\bigl((u^+)^2+(u^-)^2\bigr)\,d\Hh^{n-1}.
\end{equation}

\begin{definition}\label{def:eigenvalue}
Let $\Omega\subset\mathbb R^n$ be open with $0<|\Omega|<\infty$. We define
\begin{equation}\label{eq:generalized-eigenvalue}
\lambda_{1,\beta}(\Omega)
:=
\min\left\{
\frac{\mathcal E_\beta(u)}
{\displaystyle\int_{\mathbb R^n}u^2\,dx}
\;:\;
\begin{array}{l}
u\in SBV^{1/2}(\mathbb R^n)\setminus\{0\},\\
u=0\quad \text{a.e. on }\mathbb R^n\setminus\Omega,\\
\Hh^{n-1}(J_u\setminus\partial\Omega)=0
\end{array}
\right\}.
\end{equation}
This problem admits a non-negative minimizer (see \cite{BucurGiacomini2010,BucurGiacomini2015}), called first Robin eigenfunction. 
\end{definition}

For bounded Lipschitz sets, this definition agrees with the classical Robin eigenvalue introduced in the previous section. Hence, we use the same notation
$\lambda_{1,\beta}(\Omega)$ in both settings.

In what follows, we fix a nonnegative, \(L^2\)-normalized first Robin eigenfunction \(u_\Omega\).

We point out that the admissible functions in \eqref{eq:generalized-eigenvalue} are defined on the whole space and vanish almost everywhere outside $\Omega$. With this convention,
\begin{equation}\label{eq:eigenfunction-energy}
\mathcal E_\beta(u_\Omega)=\lambda_{1,\beta}(\Omega).
\end{equation}

From now on, the volume is fixed and equal to $V>0$. Let $R>0$ be such that
$|B_R|=V$, and set

$$
 \lambda_B:=\lambda_{1,\beta}(B_R),
 \qquad
 \delta_\beta(\Omega)
 :=\lambda_{1,\beta}(\Omega)-\lambda_B.
$$

For every $\Omega$ with $|\Omega|=V$, we fix a ball $B=B_R(x_\Omega)$ achieving the Fraenkel asymmetry of $\Omega$ in the sense of \autoref{def:fraenkel-ball}.

\subsection{Quantitative Bossel--Daners  inequality with Fraenkel asymmetry}
\label{sec:inputs}

We now recall the quantitative Robin Faber--Krahn
inequality and some immediate consequences.

\begin{theorem}[{\cite[Theorem~1.1]{BucurFeroneNitschTrombetti2018}}]\label{thm:qfk}
For every $n\ge2$, $\beta>0$, and $V>0$, there exists
$c=c(n,\beta,V)>0$ such that, for every open set
$\Omega\subset\mathbb R^n$ with $|\Omega|=V$,
\begin{equation}\label{eq:qfk}
 \delta_\beta(\Omega)\ge c\,\mathcal A(\Omega)^2.
\end{equation}
\end{theorem}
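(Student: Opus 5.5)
Since \autoref{thm:qfk} is quoted from \cite{BucurFeroneNitschTrombetti2018}, we only indicate the route we would follow to prove it. First we reduce to the case $\delta_\beta(\Omega)\le\delta_0$ for a small $\delta_0=\delta_0(n,\beta,V)$. This is harmless because $\mathcal A(\Omega)\le 2$, so for $\delta_\beta(\Omega)>\delta_0$ the inequality holds with $c=\delta_0/4$. In the small-deficit regime we take a nonnegative normalized eigenfunction $u=u_\Omega$ in the $SBV^{1/2}$ framework of \autoref{def:eigenvalue}. We record the a priori bounds available for it: an $L^\infty$ bound $\|u\|_\infty\le M(n,\beta,V)$, and a nondegeneracy lower bound $u\ge \alpha(n,\beta,V)>0$ a.e. on $\{u>0\}$, as in the Bucur--Giacomini theory for relaxed Robin minimizers. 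We would first reduce to sets $\Omega$ for which such bounds hold, e.g. by replacing $\Omega$ with $\{u>0\}$.

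The core is a quantitative version of the Bossel--Daners level-set argument. For $t\in(0,M)$ let $U_t=\{u>t\}$. Using the weak equation and $\varphi=|\nabla u|/u$, we obtain for a.e. $t$ the representation
\[
\lambda_{1,\beta}(\Omega)\,|U_t|
\;\le\;
\int_{\partial^*U_t\cap\Omega}\varphi\,d\Hh^{n-1}
+\beta\,\Hh^{n-1}(J_u\cap \partial^* U_t)
-\int_{U_t}\varphi^2\,dx .
\]
Daners' proof integrates this in $t$ and uses $P(U_t)\ge P(B^{(t)})$, where $B^{(t)}$ is the ball with $|B^{(t)}|=|U_t|$, together with comparison against the radial field of the ball. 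Here we instead insert the quantitative isoperimetric inequality
\[
P(U_t)\ge P(B^{(t)})\bigl(1+c_n\mathcal A(U_t)^2\bigr).
\]
Following the chain of inequalities, this produces $\delta_\beta(\Omega)\ge c\int_0^{M}\mathcal A(U_t)^2\,w(t)\,dt$. The weight $w(t)\ge0$ is bounded below on compact subsets of $(0,M)$, and its lower bound depends only on $n,\beta,V$ through the radial eigenfunction of the ball of volume $|U_t|$.

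It then remains to show that $\mathcal A(U_t)\gtrsim \mathcal A(\Omega)$ for $t$ in an interval of length bounded below. By the triangle inequality for the Fraenkel asymmetry,
\[
\mathcal A(U_t)\ge \mathcal A(\Omega)-C\,\frac{|\Omega\setminus U_t|}{|\Omega|}.
\]
For $t<\alpha$ the nondegeneracy bound gives $|\Omega\setminus U_t|=0$, hence $\mathcal A(U_t)\ge\mathcal A(\Omega)$ on $(0,\alpha)$. Integrating over $t\in(\alpha/2,\alpha)$ then yields $\delta_\beta(\Omega)\ge c\,\mathcal A(\Omega)^2$.

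The main obstacle is the lower bound $u\ge\alpha$ for an \emph{arbitrary} open set $\Omega$. It is not available directly, only for optimal-type sets. If the reduction to $\{u>0\}$ does not suffice, we would try a selection-principle or penalization argument instead. Concretely, we would replace $\Omega$ by a minimizer of $\lambda_{1,\beta}(\cdot)+\tau\bigl|\mathcal A(\cdot)-\mathcal A(\Omega)\bigr|$ plus a volume penalty. Such a minimizer enjoys the Bucur--Giacomini lower bound and regularity, and has comparable deficit and asymmetry. The level-set argument above would then be run on it.
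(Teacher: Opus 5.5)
\textbf{Verdict.} The paper does not prove this statement; it quotes it from \cite{BucurFeroneNitschTrombetti2018}. So your outline has to stand on its own, and it has a genuine gap exactly at the step you flag.

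\textbf{The nondegeneracy bound fails.} A lower bound $u\ge\alpha(n,\beta,V)>0$ a.e.\ on $\{u>0\}$ is false for general $\Omega$, even for smooth connected sets with arbitrarily small deficit. Attach to a ball a cylindrical spike of radius $\varepsilon$ and length $1$, keeping total volume $V$. Then $\delta_\beta(\Omega_\varepsilon)\to0$, while along the spike $u$ decays at rate $\sim(\beta/\varepsilon)^{1/2}$, so it is exponentially small at the tip. Replacing $\Omega$ by $\{u>0\}$ changes nothing, since $u$ is the same function. The Bucur--Giacomini bound comes from a truncation-and-dilation comparison that uses \emph{minimality}, which a general $\Omega$ does not have.

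\textbf{A cruder substitute is not enough.} Daners' argument produces only one good level. To integrate over levels you must multiply by $t$, so that the terms $F(t)=\int_{U_t}(\varphi-|\nabla u|/u)|\nabla u|/u$ combine into $-(t^2F)'$; here $\varphi=q(\rho(u))$ is the comparison field and $|B_{\rho(t)}|=|U_t|$. The resulting inequality has weight comparable to $t\,|U_t|$, so only levels bounded away from $0$ carry definite weight. What this route really requires is therefore a fixed height $t_0=t_0(n,\beta,V)$ with $|\{0<u\le t_0\}|\le C\delta_\beta(\Omega)^{1/2}$. That would give $\mathcal A(U_t)\ge\mathcal A(\Omega)-C\delta_\beta(\Omega)^{1/2}$ on $(t_0/2,t_0)$, and then $\delta_\beta\ge c\,\mathcal A^2$ by a two-case argument.

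The easy bound $|\{0<u\le t\}|\le C(\delta_\beta(\Omega)+t)$ does not suffice. It follows from testing $\lambda_{1,\beta}(U_t)$ with $(u-t)_+$ and applying Bossel--Daners. It only lets you use levels $t\lesssim\mathcal A(\Omega)$, and integrating over those yields just $\delta_\beta\gtrsim\mathcal A^4$.

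\textbf{What is missing.} You need a deficit-driven control of the low level sets at a fixed height. It would have to be extracted from terms your chain of inequalities discards. One is the boundary slack $(\beta-q(\rho(t)))\,\Hh^{n-1}(J_u\cap\partial^*U_t)$, where $\beta-q(\rho(t))\gtrsim|\Omega|-|U_t|$. Another is the bulk term $\int u^2(\varphi-|\nabla u|/u)^2\,dx$. Your proposal contains no step of this kind.

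\textbf{The penalization fallback.} This is a genuinely different route, and its reduction step is sound. If $\Omega'$ minimizes $\lambda_{1,\beta}+\tau|\mathcal A(\cdot)-\mathcal A(\Omega)|$ among sets of volume $V$, comparison with $\Omega$ gives $\delta_\beta(\Omega')+\tau|\mathcal A(\Omega')-\mathcal A(\Omega)|\le\delta_\beta(\Omega)$. The estimate for $\Omega'$ then transfers to $\Omega$ by distinguishing the case $\mathcal A(\Omega)\ge 2\delta_\beta(\Omega)/\tau$ from the opposite case.

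However, the two facts that carry the argument are only asserted:
\begin{enumerate}
\item Existence of a minimizer in the relaxed $SBV^{1/2}$ class on all of $\R^n$. This needs proof, because minimizing sequences may split or lose mass at infinity.
\item A Bucur--Giacomini lower bound for that minimizer, with $\alpha$ depending only on $n,\beta,V,\tau$. This requires $\tau$ small. Passing from $\Omega'$ to the dilated $\{u>t\}$ changes the penalty by order $\tau|\{0<u\le t\}|$, and this must be absorbed by the eigenvalue gain from the dilation.
\end{enumerate}
Regularity of the minimizer plays no role. As written, the proposal is a program rather than a proof.
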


 Thus if $B$ is a ball that achieves the Fraenkel asymmetry of $\Omega$ in the sense of \autoref{def:fraenkel-ball}, \autoref{thm:qfk} immediately gives
\begin{equation*}
 |\Omega\triangle B|
 \le C\,\delta_\beta(\Omega)^{1/2},
\end{equation*}
and, since $|\Omega|=|B|$,
\begin{equation}\label{eq:outer-measure}
 |\Omega\setminus B|
 =|B\setminus\Omega|
 \le C\,\delta_\beta(\Omega)^{1/2}.
\end{equation}

We now recall another result that will be useful in the following. 

\begin{lemma}\label{lem:linfty}
Let \(n\geq2\), \(\beta>0\), and \(V,\Lambda>0\). There exists a constant
\(C=C(n,\beta,V,\Lambda)>0\) such that, for every open set
\(\Omega\subset\mathbb R^n\) with
\[
    |\Omega|=V,
    \qquad
    \lambda_{1,\beta}(\Omega)\leq\Lambda,
\]
every nonnegative \(L^2\)-normalized first eigenfunction satisfies
\[
    \|u_\Omega\|_{L^\infty(\mathbb R^n)}
    \leq C.
\]
\end{lemma}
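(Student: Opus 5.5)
We prove the $L^\infty$ bound by a De Giorgi / Moser iteration adapted to the $SBV^{1/2}$ setting. The strategy follows Bucur--Giacomini. The main input is a Sobolev-type inequality valid in the free-discontinuity framework. For $v\in SBV^{1/2}(\mathbb R^n)$ (or $v$ with $v^2\in SBV$ and finite relaxed energy), the product $w=v^2$, or more conveniently $v$ itself viewed in $SBV$, satisfies
\[
|Dv|(\mathbb R^n)\le \int|\nabla v|\,dx+\int_{J_v}(v^++v^-)\,d\Hh^{n-1}.
\]
This quantity is controlled by a Cauchy--Schwarz bound in terms of $\mathcal E_\beta(v)$ and the measure of the support and of the jump set. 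Rather than this route, I would use the isoperimetric/BV embedding $\|w\|_{L^{n/(n-1)}}\le C_n|Dw|(\mathbb R^n)$ applied to $w=v^2$:
\[
|D(v^2)|(\mathbb R^n)=\int 2v|\nabla v|\,dx+\int_{J_v}\bigl((v^+)^2+(v^-)^2\bigr)d\Hh^{n-1}\cdot(\text{sign terms})\le 2\|v\|_{2}\Bigl(\int|\nabla v|^2\Bigr)^{1/2}+\int_{J_v}\bigl((v^+)^2+(v^-)^2\bigr).
\]
This yields $\|v\|_{L^{2n/(n-1)}}^2\le C(n,\beta)\bigl(\mathcal E_\beta(v)+\|v\|_2^2\bigr)$. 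That is, we gain an integrability exponent $2^*:=2n/(n-1)>2$ from the relaxed energy alone, with constants independent of $\Omega$.

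Next we test the eigenvalue equation with truncations. Since $u=u_\Omega$ minimizes the Rayleigh quotient, for $k\ge0$ we compare with $\min(u,k)$, or use the Euler--Lagrange inequality obtained by perturbing $u\mapsto u-\varepsilon (u-k)^+$. This is admissible: it is nonnegative for small $\varepsilon$, vanishes outside $\Omega$, and creates no new jump. It gives
\[
\mathcal E_\beta\bigl((u-k)^+\bigr)\le \lambda_{1,\beta}(\Omega)\int u\,(u-k)^+\,dx\le \Lambda\int_{\{u>k\}}u\,(u-k)^+\,dx.
\]
The point to check carefully is the jump part. On $J_u$ one has $((u^\pm-k)^+)^2\le (u^\pm-k)^+u^\pm$, so the boundary term of the truncation is dominated by the cross term produced by the first variation. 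The same holds for the gradient term, since $\nabla(u-k)^+=\nabla u\,\one_{\{u>k\}}$. Writing the first variation of the numerator minus $\lambda$ times the denominator gives exactly this inequality. Here we also use $\lambda_{1,\beta}(\Omega)\le\Lambda$.

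Finally we run the De Giorgi iteration. Set $A(k)=\{u>k\}$ and $\phi(k)=\int(u-k)_+^2$. Combining the Sobolev inequality above with Hölder's inequality on $A(k)$, and using $u\le (u-h)^+ + h$ on $A(k)$ for $h<k$, we obtain the standard decay
\[
\phi(k)\le \frac{C(n,\beta,\Lambda)}{(k-h)^{\theta}}\,\phi(h)^{1+\epsilon},\qquad \epsilon=\tfrac1n\cdot\text{(some positive power)},
\]
with the constants depending only on $n,\beta,\Lambda$. The initial datum satisfies $\phi(0)=1$ by normalization, and $|A(0)|\le V$. The classical iteration lemma then gives $\phi(k_0)=0$ for some $k_0=C(n,\beta,V,\Lambda)$, hence $\|u_\Omega\|_\infty\le k_0$.

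The main obstacle is justifying the first-variation inequality and the Sobolev inequality in the $SBV^{1/2}$ framework. In particular, one must verify that $(u-k)^+\in SBV^{1/2}$ with jump set contained in $J_u$, and handle the case where only one trace exceeds $k$. I would treat that case pointwise on $J_u$ using the elementary inequality above. Alternatively, I would cite Bucur--Giacomini, where this $L^\infty$ estimate is proved for minimizers.
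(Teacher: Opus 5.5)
Your proof is correct, but it takes a different route from the paper's.

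\textbf{The paper's route.} It follows the truncation argument of Bucur--Giacomini in three steps. It uses $u\wedge M$ as a competitor to get $\mathcal E_\beta((u-M)_+)\le\Lambda\int_{\{u>M\}}(u^2-M^2)$. It then imports from Bucur--Giacomini (their Lemma~6.7(d) plus scaling) the small-set coercivity $\mathcal E_\beta(v)\ge c_0|\{v>0\}|^{-1/n}\|v\|_2^2$ for $|\{v>0\}|\le m_0$. Finally it concludes with a Stampacchia-type ODE argument for $f(M)=\int(u-M)_+$, using $f'=-|\{u>M\}|$.

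\textbf{Your route.} You replace each ingredient. First, you derive the gain of integrability yourself from the $BV$ embedding applied to $v^2$, which gives $\|v\|_{2n/(n-1)}^2\le C(n,\beta)(\mathcal E_\beta(v)+\|v\|_2^2)$. By H\"older this implies the paper's small-set coercivity, so the two inputs are essentially equivalent. Second, you obtain the Caccioppoli inequality from the first variation along $u-\varepsilon(u-k)_+$. This is admissible: $s\mapsto s-\varepsilon(s-k)_+$ is increasing for $\varepsilon<1$, so no new jumps are created. The $SBV^{1/2}$ membership you flagged is settled by $((u-k)_+)^2=G(u^2)$ with $G(s)=((\sqrt s-k)_+)^2$, which is $1$-Lipschitz. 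Third, you close with De Giorgi's iteration instead of the ODE.

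Your version is self-contained and gives the slightly sharper right-hand side $\Lambda\int u(u-k)_+$. The paper's is shorter, given the Bucur--Giacomini machinery.

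\textbf{A correction for the iteration.} The constant in your iteration inequality is not independent of $k$. From $\int u(u-k)_+\le\frac{k}{k-h}\phi(h)$ and $|A(k)|\le\phi(h)/(k-h)^2$ you get $\phi(k)\le C(n,\beta)(1+\Lambda)\,k\,(k-h)^{-1-2/n}\phi(h)^{1+1/n}$. The extra factor $k$ is harmless if you iterate along $k_j=d(1-2^{-j})\le d$. The smallness condition then becomes $1=\phi(0)\le c\,d^2$, which fixes $d=d(n,\beta,\Lambda)$. In particular, no dependence on $V$ is actually needed.
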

\begin{proof}
The \(L^\infty\) estimate follows from \cite[Lemma 2.3]{BucurFeroneNitschTrombetti2018}, whose proof is based on the truncation argument of \cite[Theorem 6.11]{BucurGiacomini2015}.We recall the main steps of the argument, keeping track of the dependence of the constants on the parameters.

Recall that
\[
\lambda_\Omega\leq\Lambda,
\]
and that
\[
\|u_\Omega\|_{L^2(\mathbb R^n)}=1,
\qquad
\mathcal E_\beta(u_\Omega)=\lambda_\Omega.
\]
For \(M>0\), as in \cite[(6.30)]{BucurGiacomini2015}, let
\[
u_\Omega\wedge M:=\min\{u_\Omega,M\},
\qquad
u_{\Omega,M}:=(u_\Omega-M)_+,
\qquad
\alpha(M):=|\{u_\Omega>M\}|.
\]
The normalization immediately gives
\begin{equation}\label{bound alpha}
    \alpha(M)\leq M^{-2}.
\end{equation}

Using $u_\Omega\wedge M$ as a test function in \eqref{eq:generalized-eigenvalue} and following \cite[(6.30)--(6.34)]{BucurGiacomini2015}, we have
\begin{equation}\label{enery u_M bound}
\mathcal E_\beta(u_{\Omega,M})
\leq
\lambda_\Omega\int_{\{u_\Omega>M\}}(u_\Omega^2-M^2)\,dx
\leq
\Lambda\int_{\{u_\Omega>M\}}(u_\Omega^2-M^2)\,dx.
\end{equation}

Following
\cite[(6.35)-(6.36)]{BucurGiacomini2015} and using the scaling relation \cite[(6.6)]{BucurGiacomini2015} and
\cite[Lemma 6.7(d)]{BucurGiacomini2015}, there exist \(m_0>0\) and
\(c_0>0\), depending only on \(n\) and \(\beta\), such that
\[
\mathcal E_\beta(u_{\Omega,M})
\geq
c_0\alpha(M)^{-1/n}
\int_{\mathbb R^n}u_{\Omega,M}^2\,dx
\]
whenever \(\alpha(M)\leq m_0\).

By \eqref{bound alpha}, the latter condition holds for all sufficiently large \(M\), with a threshold depending only on \(n\) and \(\beta\). Combining this estimate with \eqref{enery u_M bound}, and using
\[
u_\Omega^2-M^2=u_{\Omega,M}^2+2Mu_{\Omega,M}
\qquad\text{on }\{u_\Omega>M\},
\]
gives
\[
\left(c_0-\Lambda\alpha(M)^{1/n}\right)
\int_{\mathbb R^n}u_{\Omega,M}^2\,dx
\leq
2\Lambda M\alpha(M)^{1/n}
\int_{\mathbb R^n}u_{\Omega,M}\,dx.
\]
Using again \eqref{bound alpha}, we may choose
\[
M_0=M_0(n,\beta,\Lambda)
\]
so that, for every \(M\geq M_0\),
\[
c_0-\Lambda\alpha(M)^{1/n}\geq \frac{c_0}{2}.
\]
Consequently,
\begin{equation}\label{l2 l1 um}
    \int_{\mathbb R^n}u_{\Omega,M}^2\,dx
\leq
C M\alpha(M)^{1/n}
\int_{\mathbb R^n}u_{\Omega,M}\,dx,
\end{equation}

where \(C=C(n,\beta,\Lambda)\).

We now follow the argument leading to \cite[(6.37)]{BucurGiacomini2015}. Set
\[
f(M):=\int_{\mathbb R^n}(u_\Omega-M)_+\,dx.
\]
By Hölder's inequality and \eqref{l2 l1 um},
\[
f(M)^2
\leq
\alpha(M)\int_{\mathbb R^n}u_{\Omega,M}^2\,dx
\leq
C M\alpha(M)^{1+1/n}f(M).
\]
Hence
\[
f(M)\leq C M\alpha(M)^{1+1/n},
\]
or equivalently,
\begin{equation}\label{bound f}
\alpha(M)f(M)^{-n/(n+1)}
\geq
cM^{-n/(n+1)},
\end{equation}
with $c=c(n,\beta,\Lambda)$

Moreover, the layer-cake formula gives
\[
f(M)
=
\int_{\mathbb R^n}(u_\Omega-M)_+\,dx
=
\int_M^\infty |\{u_\Omega>t\}|\,dt
=
\int_M^\infty\alpha(t)\,dt.
\]
Thus \(f\) is absolutely continuous and
\[
f'(M)=-\alpha(M)
\]
for almost every \(M>0\). 

Therefore, for every
\[
M_0<T<\|u_\Omega\|_{L^\infty(\mathbb R^n)},
\]
from \eqref{bound f} we obtain
\begin{equation}\label{dis integral}
    c\int_{M_0}^{T}M^{-n/(n+1)}\,dM
\leq
-\int_{M_0}^{T}
f'(M)f(M)^{-n/(n+1)}\,dM.
\end{equation}
By explicit computations, 
\[
-\int_{M_0}^{T}
f'(M)f(M)^{-n/(n+1)}\,dM
=
(n+1)
\left(
f(M_0)^{1/(n+1)}
-
f(T)^{1/(n+1)}
\right)
\leq
(n+1)f(M_0)^{1/(n+1)}.
\]
On the other hand,
\[
f(M_0)
\leq
\int_{\mathbb R^n}u_\Omega\,dx
\leq
|\{u_\Omega>0\}|^{1/2}\|u_\Omega\|_2
\leq
V^{1/2}.
\]
Thus from \eqref{dis integral} we deduce
\[
T^{1/(n+1)}
\leq
M_0^{1/(n+1)}
+
C(n,\beta,\Lambda)V^{1/(2(n+1))}.
\]
Letting \(T\) approach \(\|u_\Omega\|_{L^\infty(\mathbb R^n)}\), we conclude that
\[
\|u_\Omega\|_{L^\infty(\mathbb R^n)}
\leq
C(n,\beta,V,\Lambda).
\]
\end{proof}

\begin{corollary}\label{cor:outside-mass}
Let $n\geq 2$, $\beta>0$, $V>0$, and $\delta_0>0$.
There exists a constant $C=C(n,\beta,V,\delta_0)>0$
such that the following holds for every open set
$\Omega\subset\mathbb R^n$ with $|\Omega|=V$ and
$\delta_\beta(\Omega)\leq\delta_0$.

Let $B$ be any ball achieving the Fraenkel asymmetry
of $\Omega$, in the sense of \autoref{def:fraenkel-ball}. Then, for every nonnegative, $L^2$-normalized
first Robin eigenfunction $u_\Omega$ on $\Omega$,
extended by zero outside $\Omega$, one has
\begin{equation}\label{eq:outside-mass}
\int_{B^c}u_\Omega^2\,dx
\leq C\,\delta_\beta(\Omega)^{1/2}.
\end{equation}
\end{corollary}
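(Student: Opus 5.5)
Since $u_\Omega$ vanishes almost everywhere outside $\Omega$, the mass outside $B$ lives on $\Omega\setminus B$:
\[
\int_{B^c}u_\Omega^2\,dx=\int_{\Omega\setminus B}u_\Omega^2\,dx .
\]
The plan is to bound the integrand pointwise by a uniform $L^\infty$ bound from \autoref{lem:linfty}, and then bound the measure of the region by \eqref{eq:outer-measure}.

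\textbf{Uniform eigenvalue bound.} To apply \autoref{lem:linfty} with constants depending only on the stated parameters, we need a uniform upper bound on the eigenvalue. The hypothesis $\delta_\beta(\Omega)\le\delta_0$ gives
\[
\lambda_{1,\beta}(\Omega)=\lambda_B+\delta_\beta(\Omega)\le \lambda_B+\delta_0=:\Lambda .
\]
Here $\lambda_B=\lambda_{1,\beta}(B_R)$ depends only on $n,\beta,V$, so $\Lambda=\Lambda(n,\beta,V,\delta_0)$.

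\textbf{$L^\infty$ bound.} With this $\Lambda$ and $|\Omega|=V$, \autoref{lem:linfty} yields
\[
\|u_\Omega\|_{L^\infty(\mathbb R^n)}\le C_1(n,\beta,V,\delta_0)
\]
for every nonnegative $L^2$-normalized first eigenfunction, with no assumption on connectedness or on which eigenfunction is chosen.

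\textbf{Measure bound.} Since $B$ achieves the Fraenkel asymmetry, \autoref{thm:qfk} applies in the form \eqref{eq:outer-measure}, giving
\[
|\Omega\setminus B|\le C_2\,\delta_\beta(\Omega)^{1/2},\qquad C_2=C_2(n,\beta,V).
\]

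\textbf{Conclusion.} Combining the two bounds,
\[
\int_{B^c}u_\Omega^2\,dx\le \|u_\Omega\|_{L^\infty}^2\,|\Omega\setminus B|\le C_1^2C_2\,\delta_\beta(\Omega)^{1/2},
\]
which is \eqref{eq:outside-mass}.

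There is no real obstacle beyond tracking constants. The only point needing care is that the constant in \autoref{lem:linfty} must be uniform over the admissible class. This is exactly why $\delta_0$ enters: it caps $\Lambda$, and therefore caps the $L^\infty$ bound, independently of $\Omega$.
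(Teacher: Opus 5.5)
Your proposal is correct and follows the paper's proof: you bound $\int_{\Omega\setminus B}u_\Omega^2\,dx$ by $\|u_\Omega\|_{L^\infty}^2\,|\Omega\setminus B|$. The $L^\infty$ factor comes from \autoref{lem:linfty} with $\Lambda=\lambda_B+\delta_0$, and the measure factor from \eqref{eq:outer-measure}, with constants tracked correctly.
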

\begin{proof}
Using \autoref{lem:linfty}, with $\Lambda=\lambda_B+\delta_0$, we deduce that there exists a constant
depending only on $n$, $\beta$, $V$ and $\delta_0$ such that 
\[
 \int_{B^c}u_\Omega^2\,dx
 =\int_{\Omega\setminus B}u_\Omega^2\,dx
 \le \|u_\Omega\|_\infty^2|\Omega\setminus B|.
\]
The conclusion follows from \eqref{eq:outer-measure}.
\end{proof}

\subsection{Some properties of the first eigenfunction of a ball}
\label{sec:radial}

Since the eigenvalue deficit, the Fraenkel asymmetry, and the eigenfunction distance are invariant under simultaneous translations of $\Omega$ and $B$, we may assume throughout Sections~3--6 that the selected ball achieving the Fraenkel asymmetry is \(B=B_R(0)\).

Let $u_B$ be the nonnegative
first eigenfunction normalized in $L^2(B)$. By simplicity
and rotational invariance, there exists a smooth and decreasing function $\phi:[0,R]\to [0,+\infty)$ such that
\[
 u_B(x)=\phi(|x|)\one_{B_R}(x).
\]
Moreover, $\phi$ solves
\begin{equation}\label{eq:radial-ode}
 \begin{cases}
 -\phi''(r)-\dfrac{n-1}{r}\phi'(r)=\lambda_B\phi(r)
     & 0<r<R,\\[1.5mm]
 \phi'(0)=0,\\[1mm]
 \phi'(R)+\beta\phi(R)=0.
 \end{cases}
\end{equation}
It is well known that $\phi$ is strictly decreasing and $\phi'(0)=0$. Thus $\phi$ is  bounded both from above and from below by positive constants, i.e.
\begin{equation}\label{eq:phi-bounds}
0<m_B:=\phi(R)
\leq \phi(|x|)
\leq \phi(0)=:M_B<+\infty.
\end{equation}
We introduce
\begin{equation}\label{eq:q-def}
 q(r):=-\frac{\phi'(r)}{\phi(r)},
 \qquad 0\le r\le R.
\end{equation}
The function $q$ is the opposite of the derivative of $\log(\phi)$. This quantity arises naturally in the study of the Robin eigenvalue and has already appeared in several approaches to the problem. In particular, the quantity 
\[\dfrac{|\nabla u_B(x)|}{u_B(x)}=q(|x|),\qquad x\in B,\] 
plays a distinguished role in the $H$-function method underlying the Bossel--Daners inequality \cite{Bossel1986,Daners2006}.

We now recall some elementary properties of $q$ that will be used repeatedly. In particular we recall some properties proved in \cite[Section~4.2]{BW}, see also \cite[Proposition 3.5]{CV}.

\begin{lemma}\label{lem:q}
The function $q$ satisfies
\begin{equation}\label{eq:riccati}
 q'(r)
 =
 \lambda_B+q(r)^2-\frac{n-1}{r}q(r),
 \qquad r\in(0,R],
\end{equation}
and
\begin{equation}\label{eq:q-less-beta}
 q(0)=0,
 \qquad
 0<q(r)<\beta
 \quad\text{for }r\in(0,R),
 \qquad
 q(R)=\beta.
\end{equation}
Moreover, $q$ is strictly increasing on $[0,R]$ and
\begin{equation}\label{eq:q-boundary-derivative}
 q'(R)>0.
\end{equation}
Finally, there exists a constant $c_0=c_0(n,\beta,R)>0$ such that
\begin{equation}\label{eq:q-linear}
 \beta-q(r)\geq c_0(R-r)
 \qquad\text{for every }0\leq r\leq R.
\end{equation}
\end{lemma}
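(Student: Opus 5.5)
The Riccati identity \eqref{eq:riccati} should come straight from the radial ODE \eqref{eq:radial-ode}. Since $\phi>0$ on $[0,R]$ by \eqref{eq:phi-bounds}, $q=-\phi'/\phi$ is smooth on $[0,R]$. Differentiating,
\[
q'=-\frac{\phi''}{\phi}+\frac{(\phi')^2}{\phi^2}=-\frac{\phi''}{\phi}+q^2 .
\]
Substituting $-\phi''=\lambda_B\phi+\frac{n-1}{r}\phi'$ gives $-\phi''/\phi=\lambda_B-\frac{n-1}{r}q$, which is \eqref{eq:riccati}. The boundary values follow directly. First, $q(0)=0$ because $\phi'(0)=0$. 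Second, $q(R)=\beta$ from the Robin condition $\phi'(R)=-\beta\phi(R)$. Third, $q>0$ on $(0,R]$ because $\phi$ is strictly decreasing. For the strict decrease I would use the integrated form $-(r^{n-1}\phi')'=\lambda_B r^{n-1}\phi>0$, which gives $-r^{n-1}\phi'(r)=\lambda_B\int_0^r s^{n-1}\phi\,ds>0$.

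The core step is monotonicity of $q$. Near $0$, l'Hôpital (or the ODE at $r=0$, which gives $\phi''(0)=-\lambda_B\phi(0)/n$) yields $q(r)\sim \lambda_B r/n$, so $q'>0$ near $0$. Suppose $q'$ vanishes somewhere in $(0,R]$, and let $r_0$ be the first zero. Differentiating \eqref{eq:riccati} gives
\[
q''=2qq'+\frac{n-1}{r^2}q-\frac{n-1}{r}q',
\]
and at $r_0$ this is $q''(r_0)=\frac{n-1}{r_0^2}q(r_0)>0$. So $r_0$ would be a strict local minimum of $q'$. But $q'>0$ on $(0,r_0)$ and $q'(r_0)=0$ force $q''(r_0)\le 0$, a contradiction. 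Hence $q'>0$ on $(0,R]$, and in particular \eqref{eq:q-boundary-derivative} holds. Note that this argument needs $n\ge2$ and $q(r_0)>0$, both of which hold. Strict increase together with $q(R)=\beta$ then gives $q<\beta$ on $(0,R)$, completing \eqref{eq:q-less-beta}.

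Finally, for \eqref{eq:q-linear}, I set $c_0:=\min_{[0,R]}q'$. This is positive because $q'$ is continuous on $[0,R]$, is positive on $(0,R]$, and satisfies $q'(0)=\lambda_B/n>0$ (from $q'(0)=\lim_{r\to0}q(r)/r$). Then
\[
\beta-q(r)=\int_r^R q'\ge c_0(R-r).
\]
The constant depends only on $n,\beta,R$, since $\lambda_B$ and $\phi$ (up to normalization, which does not affect $q$) are determined by these quantities. The main delicacy is the behaviour at $r=0$, where the term $\frac{n-1}{r}q$ is singular. It is handled by the expansion $q(r)=\frac{\lambda_B}{n}r+O(r^3)$, which follows from the smoothness and evenness of $\phi$.
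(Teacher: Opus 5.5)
Your proof is correct, but it takes a different route from the paper's. The paper does not prove \eqref{eq:riccati}--\eqref{eq:q-boundary-derivative}; it cites \cite[Section~4.2]{BW} for them and proves only \eqref{eq:q-linear}. For that step it sets $h(r):=(\beta-q(r))/(R-r)$, which is positive on $[0,R)$ because $q<\beta$ there, and which extends continuously to $r=R$ with $h(R)=q'(R)>0$; it then takes $c_0:=\min_{[0,R]}h$.

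You instead derive everything from the radial ODE. The Riccati identity comes from direct differentiation. Strict monotonicity comes from the first-zero argument for $q'$: the differentiated Riccati equation gives $q''=\frac{n-1}{r^2}q>0$ at any zero of $q'$, so $q'$ can never reach zero from above. You then take $c_0:=\min_{[0,R]}q'$, which is positive because $q'(0)=\lambda_B/n$, and integrate.

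Your version is self-contained, and it gives the stronger fact that $q'$ has a positive lower bound on all of $[0,R]$. The paper's step for \eqref{eq:q-linear} uses less: only $q<\beta$ on $[0,R)$ and $q'(R)>0$, with no interior monotonicity. Its constant is also never worse than yours, since $h(r)=\frac{1}{R-r}\int_r^R q'$ is an average of $q'$, so $\min h\ge\min q'$.
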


\begin{proof}
The properties \eqref{eq:riccati}--\eqref{eq:q-boundary-derivative} are
proven in \cite[Section~4.2]{BW}.

It remains only to prove \eqref{eq:q-linear}. Define
\[
 h(r):=\frac{\beta-q(r)}{R-r},
 \qquad 0\leq r<R.
\]
Since $q(R)=\beta$ and $q$ is $C^1$ up to $R$, the function $h$ extends
continuously to $r=R$, with
\[
 h(R)=q'(R)>0.
\]
Moreover, \eqref{eq:q-less-beta} gives $h(r)>0$ for every $r\in[0,R)$.
Therefore the continuous extension of $h$ is strictly positive on the
compact interval $[0,R]$, and hence
\[
 c_0:=\min_{[0,R]}h>0,
\]
which proves \eqref{eq:q-linear}.
\end{proof}
We now introduce the vector field that will be used in the following. Recall that, in $B$,
\[
    \nabla\log\phi(|x|)
    =
    -q(|x|)\frac{x}{|x|}.
\]
Since $q(R)=\beta$, we have
\[
    \nabla\log\phi(|x|)
    =
    -\beta\frac{x}{|x|}
    \qquad\text{on }\partial B.
\]
We choose an extension $\bar\phi$ of $\phi$ to $\mathbb R^n$ such that the latter equality holds with $\phi$ replaced by $\bar\phi$ on $B^c$. In other words, we set
\begin{equation}\label{eq:phibar}
\bar\phi(x)
:=
\begin{cases}
\phi(|x|), & |x|\leq R,\\[1mm]
\phi(R)e^{-\beta(|x|-R)}, & |x|>R.
\end{cases}
\end{equation}
By the Robin boundary condition $\phi'(R)=-\beta\phi(R)$, we have that
$\bar\phi\in C^1(\mathbb R^n)$.
We then set
\[
    b:=\nabla\log\bar\phi.
\]
Thus $b\in  W^{1,\infty}(\R^n)$ and
\[
    b(x)=
    \begin{cases}
    -q(|x|)\dfrac{x}{|x|}, & 0<|x|<R,\\[2mm]
    -\beta\dfrac{x}{|x|}, & |x|>R,
    \end{cases}
\]
with $b(0)=0$. In particular, \autoref{lem:q} gives
\begin{equation}\label{bound b}
       |b|\leq\beta
    \qquad\text{in }\mathbb R^n.
\end{equation}

Inside the ball, by direct computation and using \eqref{eq:riccati} (see  \cite[Proposition~2.3]{BucurDaners2010}) we have
\begin{equation}\label{eq:lambda as b}
      -\operatorname{div}b-|b|^2
    =
    \lambda_B
    \qquad\text{in }B,
\end{equation}

 On the other hand, a direct computation in $B^c$
gives
\begin{equation}\label{boumd b in Bc}
     -\operatorname{div}b-|b|^2
    =
    \frac{(n-1)\beta}{|x|}-\beta^2
    \geq-\beta^2.
\end{equation}

Finally, from \autoref{lem:q}, for every $x\in B$,
\begin{equation}\label{eq:b-distance}
    \beta-|b(x)|
    =
    \beta-q(|x|)
    \geq
    c_0\,\operatorname{dist}(x,\partial B).
\end{equation}

\section{Reformulation of the result}
\label{sec:reformulation}

We reformulate the eigenfunction stability problem in terms of the quotient
between $u_\Omega$ and the first eigenfunction of the comparison ball.

Let $\Omega\subset\mathbb R^n$ be an open set with $|\Omega|=V$,
and let $B$ be a ball achieving the Fraenkel asymmetry of
$\Omega$ in the sense of \autoref{def:fraenkel-ball}.
Let $\bar\phi$ be the positive function introduced in
\autoref{sec:radial}. For every function $u$ admissible in
\eqref{eq:generalized-eigenvalue} with
$\mathcal E_\beta(u)<+\infty$, we set
\begin{equation}\label{eq:f-def}
    f:=\frac{u}{\bar\phi}.
\end{equation}
In particular, for a nonnegative, $L^2$-normalized first
Robin eigenfunction $u_\Omega$, we write
\[
    f_\Omega:=\frac{u_\Omega}{\bar\phi}.
\]

If $\Omega=B$, then $u_\Omega$ and $\bar\phi$ are proportional and $f_\Omega$ is
constant on $B$. Therefore, in order to compare $u_\Omega$ with the
eigenfunction of the ball, it is natural to estimate how far $f_\Omega$ is from
being constant on $B$.

We notice that, since $\frac{1}{\bar\phi}$ is strictly positive and $C^1$, we have

$$
    f\in SBV^{1/2}_{loc}(\mathbb R^n).
$$

Moreover $J_f=J_u$ and
\begin{equation}\label{eq:f-traces}
f^\pm=\frac{u^\pm}{\bar\phi}.
\end{equation}

We denote by
\begin{equation}\label{eq:f-mean}
\langle f\rangle_B:=\dfrac{1}{|B|}\int_B f\,dx
\end{equation}
the average of $f$ over $B$. We have the following result. 

\begin{proposition}\label{prop:reduction}
Let $\delta_0>0$. Suppose that there exists a constant \(C_0=C_0(n,\beta,V,\delta_0)>0\) such that, for every open set \(\Omega\subset\mathbb R^n\) with $|\Omega|=V$ and
$\delta_\beta(\Omega)\le\delta_0$, every ball $B$ achieving the Fraenkel asymmetry of $\Omega$ in the sense of \autoref{def:fraenkel-ball}, and every nonnegative, $L^2$-normalized first Robin eigenfunction $u_\Omega$, the quotient $f_\Omega$ defined above satisfies
\begin{equation}\label{eq:target-oscillation}
        \int_B |f_\Omega-\langle f_\Omega\rangle_B|^2\,dx
    \le C_0\delta_\beta(\Omega)^{1/2},
\end{equation}

Then there exists a constant $C=C(n,\beta,V,\delta_0)$ such that 
\[
    \|u_\Omega-u_B\|_{L^2(\mathbb R^n)}
    \le C\delta_\beta(\Omega)^{1/4}.
\]
\end{proposition}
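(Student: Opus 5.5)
We split $\|u_\Omega-u_B\|_{L^2(\mathbb R^n)}^2$ into the contribution outside $B$ and the one inside $B$. Outside $B$ we have $u_B=0$, so \autoref{cor:outside-mass} gives
\[
\int_{B^c}|u_\Omega-u_B|^2\,dx=\int_{B^c}u_\Omega^2\,dx\le C\delta_\beta(\Omega)^{1/2}.
\]
Inside $B$ we write $u_\Omega=f_\Omega\phi$ and $u_B=\kappa\phi$, where $\kappa:=\|\phi\|_{L^2(B)}^{-1}$ is a constant depending only on $n,\beta,V$. Setting $a:=\langle f_\Omega\rangle_B$ and using $\phi\le M_B$, we obtain
\[
\int_B|u_\Omega-u_B|^2\,dx\le 2M_B^2\int_B|f_\Omega-a|^2\,dx+2|a-\kappa|^2\int_B\phi^2\,dx .
\]
By the hypothesis \eqref{eq:target-oscillation}, the first term is at most $C\delta_\beta(\Omega)^{1/2}$. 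It therefore remains to show $|a-\kappa|\le C\delta_\beta(\Omega)^{1/4}$, and this is the only step that needs an idea.

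To control the constant $a$ we use the normalizations. Put $t:=\|u_\Omega\|_{L^2(B)}$. Since $\|u_\Omega\|_{L^2(\mathbb R^n)}=1$, the outside-mass bound gives $1-t^2\le C\delta_\beta^{1/2}$, hence $1-t\le C\delta_\beta^{1/2}$ because $t\le 1$. On the other hand, the triangle inequality in $L^2(B)$ gives
\[
\bigl|t-a\|\phi\|_{L^2(B)}\bigr|=\bigl|\|f_\Omega\phi\|_{L^2(B)}-\|a\phi\|_{L^2(B)}\bigr|\le M_B\|f_\Omega-a\|_{L^2(B)}\le C\delta_\beta^{1/4}.
\]
Combining the two estimates yields $|a\|\phi\|_{L^2(B)}-1|\le C(\delta_\beta^{1/4}+\delta_\beta^{1/2})$. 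Dividing by $\|\phi\|_{L^2(B)}=\kappa^{-1}$ gives $|a-\kappa|\le C(\delta_\beta^{1/4}+\delta_\beta^{1/2})$.

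Since $\delta_\beta\le\delta_0$, we have $\delta_\beta^{1/2}\le\delta_0^{1/4}\delta_\beta^{1/4}$, so $|a-\kappa|\le C\delta_\beta^{1/4}$ with $C$ depending on $\delta_0$. Summing the three contributions gives $\|u_\Omega-u_B\|_{L^2(\mathbb R^n)}^2\le C\delta_\beta(\Omega)^{1/2}$, and taking square roots proves the claim. The constants depend only on $n,\beta,V,\delta_0$, through $M_B$, $\kappa$, $C_0$ and \autoref{cor:outside-mass}.

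Nothing beyond this is needed: the oscillation estimate is given as a hypothesis, and the only subtle point is pinning the mean of $f_\Omega$ to the ball's normalization constant through the $L^2$ normalization.
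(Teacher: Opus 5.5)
Your proof is correct and follows essentially the paper's route: both bound the distance of $u_\Omega$ from $a\,u_B$ using the hypothesis and \autoref{cor:outside-mass}, then pin down the mean $a=\langle f_\Omega\rangle_B$ through the $L^2$ normalization and a reverse triangle inequality. The only difference is bookkeeping: the paper does this in one step on all of $\mathbb R^n$, while you split inside and outside $B$ via $t=\|u_\Omega\|_{L^2(B)}$. Two small remarks: in the paper $\phi$ is the profile of the already normalized $u_B$, so your $\kappa$ equals $1$; and the identity $\|a\phi\|_{L^2(B)}=a\|\phi\|_{L^2(B)}$ uses $a\ge0$, which holds since $f_\Omega\ge0$.
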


\begin{proof}
Let $m=\langle f_\Omega\rangle_B$.  Since $f_\Omega\ge0$, one has $m\ge0$.  On $B$,
$u_\Omega=\phi f_\Omega$, and therefore \eqref{eq:phi-bounds} and
\eqref{eq:target-oscillation} give
\begin{equation}\label{eq:reduction-inside}
 \int_B|u_\Omega-m\phi|^2\,dx
 \le M_B^2\int_B|f_\Omega-m|^2\,dx
 \le C\delta_\beta(\Omega)^{1/2}.
\end{equation}
On $B^c$, the zero extension of $u_B$ vanishes.  \autoref{cor:outside-mass} consequently gives
\begin{equation}\label{eq:reduction-multiple}
 \|u_\Omega-mu_B\|_{L^2(\mathbb R^n)}^2
 \le C\delta_\beta(\Omega)^{1/2}.
\end{equation}
  Since $m\ge0$ and both $u_\Omega$ and $u_B$ are normalized in $L^2(\R^n)$, the reverse
triangle inequality gives
\[
 |1-m|
 =\bigl|\|u_\Omega\|_2-\|mu_B\|_2\bigr|
 \le\|u_\Omega-mu_B\|_2
\leq C\delta_\beta(\Omega)^{\frac{1}{4}}.\]
It follows that
\[
 \|u_\Omega-u_B\|_2
 \le\|u_\Omega-mu_B\|_2+|m-1|
 \le C\delta_\beta^{1/4}(\Omega),
\]
which proves the claim.
\end{proof}

Thus, the proof of \autoref{thm:main} is reduced to showing that
$u_\Omega/\bar\phi$ is quantitatively close to a constant on $B$.

\section{Rewriting the energy}
\label{sec:groundstate}

We now rewrite the Robin energy in \eqref{eq:relaxed-energy} in terms of the quotient
\[
    f=\frac{u}{\bar\phi}
\]
introduced in \eqref{eq:f-def}. 

\begin{proposition}\label{prop:groundstate}
Let $u\in SBV^{1/2}(\mathbb R^n)$ satisfy
$\mathcal E_\beta(u)<+\infty$. Let $\bar\phi$ and $b$ be the functions introduced in \autoref{sec:radial}, and set
\[
    f:=\frac{u}{\bar\phi}.
\]
Then
\begin{align}
 \mathcal E_\beta(u)
 ={}&
 \int_{\mathbb R^n}\bar\phi^2|\nabla f|^2\,dx
 +\int_{\mathbb R^n}
 \bigl(-\operatorname{div}b-|b|^2\bigr)u^2\,dx
 \label{eq:groundstate}\\
 &+
 \int_{J_u}
 \Bigl[
 (\beta-b\cdot\nu_u)(u^+)^2
 +(\beta+b\cdot\nu_u)(u^-)^2
 \Bigr]\,d\Hh^{n-1}.
 \notag
\end{align}
\end{proposition}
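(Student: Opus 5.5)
This is a ground-state substitution. Pointwise on $\{u>0\}$ we write $u=\bar\phi f$, so that $\nabla u=\bar\phi\nabla f+f\nabla\bar\phi=\bar\phi(\nabla f+fb)$. Expanding the square gives
\[
|\nabla u|^2=\bar\phi^2|\nabla f|^2+2\bar\phi^2 f\,b\cdot\nabla f+\bar\phi^2f^2|b|^2
=\bar\phi^2|\nabla f|^2+b\cdot\nabla_{ac}(u^2)-|b|^2u^2 .
\]
Here we used two identities: $u^2=\bar\phi^2f^2$ with $\bar\phi^2 f\nabla f=\tfrac12\nabla(u^2)-f^2\bar\phi^2 b$, and $\nabla(u^2)=2u\nabla u$ in the sense of the absolutely continuous part, from the definition of $\nabla u$ in $SBV^{1/2}$. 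Both hold a.e., and both sides vanish on $\{u=0\}$. Integrating,
\[
\int|\nabla u|^2=\int\bar\phi^2|\nabla f|^2+\int b\cdot\nabla_{ac}(u^2)\,dx-\int|b|^2u^2\,dx .
\]
So everything reduces to computing $\int b\cdot\nabla_{ac}(u^2)$.

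For this we use the distributional derivative of the $SBV$ function $w=u^2$:
\[
Dw=\nabla w\,\mathcal L^n+(w^+-w^-)\nu_w\,\Hh^{n-1}\llcorner J_w .
\]
Since $b\in W^{1,\infty}$ is Lipschitz and bounded, we test $Dw$ against $b$:
\[
\int b\cdot dDw=-\int w\operatorname{div}b\,dx .
\]
This requires a justification, because $b$ does not have compact support. We use a cutoff $\chi_k$ with $0\le\chi_k\le1$, $|\nabla\chi_k|\le 1/k$, equal to $1$ on $B_k$. Then $w\in L^1$, $|Dw|(\R^n)<\infty$ (this follows from $\mathcal E_\beta(u)<\infty$ plus $u\in L^2$, via $|\nabla w|\le u^2+|\nabla u|^2$ and the finiteness of the jump energy), and $b\chi_k\in W^{1,\infty}_c$. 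The error term $\int w\,b\cdot\nabla\chi_k$ tends to $0$ by dominated convergence. This approximation step is the only genuinely technical point. One also needs to check that the relevant integrals are finite: $\operatorname{div} b$ is bounded (it equals $(n-1)\beta/|x|$ outside $B$), and the jump term is finite because $|b|\le\beta$.

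Splitting the derivative then gives
\[
\int b\cdot\nabla_{ac}(u^2)\,dx=-\int u^2\operatorname{div}b\,dx-\int_{J_u}\bigl((u^+)^2-(u^-)^2\bigr)\,b\cdot\nu_u\,d\Hh^{n-1}.
\]
Here we use the convention $J_u=J_{u^2}$ with $\nu_u$ pointing towards $u^+$, together with $w^\pm=(u^\pm)^2$.

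Substituting back, the volume terms combine into $\int(-\operatorname{div}b-|b|^2)u^2$. Adding the Robin term $\beta\int_{J_u}((u^+)^2+(u^-)^2)$ from \eqref{eq:relaxed-energy} to the jump contribution produces exactly
\[
\int_{J_u}\Big[(\beta-b\cdot\nu_u)(u^+)^2+(\beta+b\cdot\nu_u)(u^-)^2\Big]\,d\Hh^{n-1},
\]
which is \eqref{eq:groundstate}. For the last two steps, we check the sign convention for $\nu_u$ against the Ambrosio–Fusco–Pallara orientation. The other point to verify is that $\bar\phi\in C^1$ with $\bar\phi>0$, which guarantees $J_f=J_u$ and that $f$ has no additional Cantor or jump parts. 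Both are bookkeeping once the integration by parts above is justified.
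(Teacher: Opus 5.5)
Your proposal is correct and follows the paper's proof essentially step by step. Both arguments use the pointwise identity $|\nabla u|^2=\bar\phi^2|\nabla f|^2+b\cdot\nabla(u^2)-|b|^2u^2$, the $SBV$ decomposition of $D(u^2)$, integration by parts against the Lipschitz field $b$ justified by a cutoff, and recombination with the Robin jump term. One minor slip: outside $B$ one has $\operatorname{div}b=-(n-1)\beta/|x|$, not $+(n-1)\beta/|x|$, which does not affect the boundedness you need.
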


\begin{proof}
On the approximate differentiability set of $u$, using the notation introduced in \autoref{sec:radial},
\[
    \nabla u
    =\bar\phi\nabla f+f\nabla\bar\phi
    =\bar\phi\nabla f+ub.
\]
Using
\[
    \nabla(u^2)=2u\nabla u,
\]
we obtain
\begin{equation}\label{eq:bulk-algebra}
    |\nabla u|^2
    =
    \bar\phi^2|\nabla f|^2
    +b\cdot\nabla(u^2)
    -|b|^2u^2
\end{equation}
almost everywhere.

Since $u^2\in SBV(\mathbb R^n)$,
\begin{equation}\label{eq:Dusq}
 D(u^2)
 =
 \nabla(u^2)\,\mathcal L^n
 +
 \bigl((u^+)^2-(u^-)^2\bigr)\nu_u\,
 \Hh^{n-1}\llcorner J_u.
\end{equation}
Using the integration-by-parts formula for $BV$ functions, we obtain
\begin{align}
 \int_{\mathbb R^n}b\cdot\nabla(u^2)\,dx
 ={}&
 -\int_{\mathbb R^n}(\operatorname{div}b)u^2\,dx
 \label{eq:ibp-u2}\\
 &-
 \int_{J_u}
 b\cdot\nu_u
 \bigl((u^+)^2-(u^-)^2\bigr)\,d\Hh^{n-1}.
 \notag
\end{align}
To be precise, since $b$ is not compactly supported, one first applies the formula to $\eta_k b$, where $\eta_k$ is a standard cutoff between $B_k$ and $B_{2k}$, and then lets $k\to\infty$. 

Substituting \eqref{eq:ibp-u2} into the integral of
\eqref{eq:bulk-algebra} and adding the Robin jump energy gives
\eqref{eq:groundstate}.
\end{proof}

We now apply \autoref{prop:groundstate} to the first eigenfunction $u_\Omega$.

\begin{proposition}\label{prop:coercivity-f} Let \(n\ge2\), \(\beta>0\), \(V>0\), and \(\delta_0>0\). Let \(\Omega\subset\mathbb R^n\) be open, with \(|\Omega|=V\) and \(\delta_\beta(\Omega)\le\delta_0\). Let $B$ be any ball achieving the Fraenkel asymmetry of $\Omega$ in the sense of \autoref{def:fraenkel-ball}, and let $u_\Omega$ be any nonnegative, $L^2$-normalized first Robin eigenfunction of $\Omega$. Let
\[
    f_\Omega=\frac{u_\Omega}{\bar\phi}.
\]
Then there exists a constant $C=C(n,\beta,V,\delta_0)>0$ such that
\begin{equation}\label{eq:coercivity-f}
 \int_B|\nabla f_\Omega|^2\,dx
 +
 \int_{J_{f_\Omega}\cap B}
 \operatorname{dist}(x,\partial B)
 \bigl((f_\Omega^+)^2+(f_\Omega^-)^2\bigr)
 \,d\Hh^{n-1}
 \leq
 C\,\delta_\beta(\Omega)^{1/2}.
\end{equation}
\end{proposition}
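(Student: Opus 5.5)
Apply \autoref{prop:groundstate} to $u=u_\Omega$ and use $\mathcal E_\beta(u_\Omega)=\lambda_{1,\beta}(\Omega)=\lambda_B+\delta_\beta(\Omega)$ together with $\int u_\Omega^2=1$. Subtracting $\lambda_B=\lambda_B\int_{\mathbb R^n}u_\Omega^2$, the identity \eqref{eq:groundstate} becomes
\begin{align*}
\delta_\beta(\Omega)
={}&\int_{\mathbb R^n}\bar\phi^2|\nabla f_\Omega|^2\,dx
+\int_{B^c}\bigl(-\operatorname{div}b-|b|^2-\lambda_B\bigr)u_\Omega^2\,dx\\
&+\int_{J_{u_\Omega}}\Bigl[(\beta-b\cdot\nu)(u_\Omega^+)^2+(\beta+b\cdot\nu)(u_\Omega^-)^2\Bigr]\,d\Hh^{n-1},
\end{align*}
since by \eqref{eq:lambda as b} the bulk potential term cancels exactly with $\lambda_B u_\Omega^2$ inside $B$. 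The only term without a sign is the one on $B^c$. By \eqref{boumd b in Bc} its integrand is bounded below by $-(\beta^2+\lambda_B)u_\Omega^2$. Hence \autoref{cor:outside-mass} (with $\Lambda=\lambda_B+\delta_0$) shows it is $\ge -C\delta_\beta(\Omega)^{1/2}$. This is exactly where the exponent $1/2$ comes from, and it is the only loss.

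Every remaining term is nonnegative. In the bulk, $\bar\phi^2\ge m_B^2$ on $B$ by \eqref{eq:phi-bounds}. On the jump set, $|b|\le\beta$ by \eqref{bound b}, so both coefficients $\beta\mp b\cdot\nu$ are $\ge\beta-|b|\ge0$ everywhere. Restricting to $J_{u_\Omega}\cap B$, \eqref{eq:b-distance} gives $\beta-|b(x)|\ge c_0\operatorname{dist}(x,\partial B)$. Since $J_f=J_u$ and $u^\pm=\bar\phi f^\pm$ with $\bar\phi\ge m_B$ on $B$ (see \eqref{eq:f-traces}), we get $(u_\Omega^\pm)^2\ge m_B^2(f_\Omega^\pm)^2$. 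We discard the jump contributions on $B^c$ and on $\partial B$; those on $\partial B$ have nonnegative coefficient, and that suffices.

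Putting these together,
\[
m_B^2\int_B|\nabla f_\Omega|^2\,dx+c_0m_B^2\int_{J_{f_\Omega}\cap B}\operatorname{dist}(x,\partial B)\bigl((f_\Omega^+)^2+(f_\Omega^-)^2\bigr)\,d\Hh^{n-1}\le\delta_\beta(\Omega)+C\delta_\beta(\Omega)^{1/2}.
\]
Using $\delta_\beta\le\delta_0^{1/2}\delta_\beta^{1/2}$ yields \eqref{eq:coercivity-f}.

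The main point to check is that \autoref{prop:groundstate} applies. We need $u_\Omega\in SBV^{1/2}$ with finite energy, and the integration by parts must hold with the cutoff. The latter is fine because $b\in W^{1,\infty}$, $\operatorname{div}b$ is bounded, and $u_\Omega^2\in L^1$. One should also note that $\operatorname{div}b$ is understood as an $L^\infty$ function, with no singular part on $\partial B$. This holds because $b$ is Lipschitz, being the gradient of $\log\bar\phi$ with $\bar\phi\in C^1$ and matching normal derivatives. So the pointwise identities \eqref{eq:lambda as b} and \eqref{boumd b in Bc} hold a.e., with no extra surface term.
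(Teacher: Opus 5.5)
Your proposal is correct and follows the paper's proof essentially step for step. You apply the ground-state identity, cancel the potential against $\lambda_B$ inside $B$, control the $B^c$ contribution by $(\lambda_B+\beta^2)\int_{B^c}u_\Omega^2\le C\delta_\beta(\Omega)^{1/2}$ via the outside-mass corollary, bound the jump coefficients on $B$ below by $c_0\operatorname{dist}(x,\partial B)$, and pass to $f_\Omega$ using $\bar\phi\ge m_B$. Your additional remark that $b$ is Lipschitz, so that $\operatorname{div}b\in L^\infty$ has no singular part on $\partial B$, spells out what the paper leaves implicit in its claim $b\in W^{1,\infty}$.
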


\begin{proof}
Since $u_\Omega$ is $L^2$-normalized,
\[
    \mathcal E_\beta(u_\Omega)
    =
    \lambda_{1,\beta}(\Omega)
    =
    \lambda_B+\delta_\beta(\Omega).
\]
Applying \autoref{prop:groundstate} to $u_\Omega$, we obtain
\begin{equation}\label{eqriscrittura lambda +delta}
    \begin{split}
 \lambda_B+\delta_\beta(\Omega)
 ={}&
 \int_{\mathbb R^n}\bar\phi^2|\nabla f_\Omega|^2\,dx
 +
 \int_{\mathbb R^n}
 \bigl(-\operatorname{div}b-|b|^2\bigr)u_\Omega^2\,dx\\
 &+
 \int_{J_{u_\Omega}}
 \Bigl[
 (\beta-b\cdot\nu_{u_\Omega})(u_\Omega^+)^2
 +
 (\beta+b\cdot\nu_{u_\Omega})(u_\Omega^-)^2
 \Bigr]\,d\Hh^{n-1}.
\end{split}
\end{equation}

We estimate separately the last two terms. 

\medskip

Firstly, by \eqref{eq:lambda as b} and \eqref{boumd b in Bc}, we have that 
\begin{equation}\label{inermediate 1 per cos}
\int_{\mathbb R^n}
 \bigl(-\operatorname{div}b-|b|^2\bigr)u_\Omega^2\,dx\ge\lambda_B\int_Bu_\Omega^2\,dx
 -\beta^2\int_{B^c}u_\Omega^2\,dx=\lambda_B-(\lambda_B+\beta^2)\int_{B^c} u_\Omega^2\,dx,
\end{equation}
where we have used that 
\[
    \int_Bu_\Omega^2\,dx
    =
    1-\int_{B^c}u_\Omega^2\,dx.
\]
\medskip

We are now ready to bound the jump term in \eqref{eqriscrittura lambda +delta}. By \eqref{bound b},
\[
    \beta\mp b\cdot\nu_{u_\Omega}
    \geq \beta-|b|
    \geq0,
\]
so the jump contribution is nonnegative on the whole space. Moreover, on $B$, \eqref{eq:b-distance} gives
\[
\begin{aligned}
 &(\beta-b\cdot\nu_{u_\Omega})(u_\Omega^+)^2
 +(\beta+b\cdot\nu_{u_\Omega})(u_\Omega^-)^2\geq
 c_0\,\operatorname{dist}(x,\partial B)
 \bigl((u_\Omega^+)^2+(u_\Omega^-)^2\bigr).
\end{aligned}
\]
Thus we have that 
\[
\begin{aligned}
 &\int_{J_{u_\Omega}}
 \Bigl[
 (\beta-b\cdot\nu_{u_\Omega})(u_\Omega^+)^2
 +
 (\beta+b\cdot\nu_{u_\Omega})(u_\Omega^-)^2
 \Bigr]\,d\Hh^{n-1}\\
 &\qquad\geq c_0 \int_{J_{u_\Omega}\cap B}\operatorname{dist}(x,\partial B)
 \bigl((u_\Omega^+)^2+(u_\Omega^-)^2\bigr)\,d\Hh^{n-1}.
\end{aligned}
\]

Therefore, using the latter and \eqref{inermediate 1 per cos} in \eqref{eqriscrittura lambda +delta}, we have that 
\begin{align}
 \delta_\beta(\Omega)
 \geq{}&
 \int_{\mathbb R^n}\bar\phi^2|\nabla f_\Omega|^2\,dx
 -(\lambda_B+\beta^2)
 \int_{B^c}u_\Omega^2\,dx
 \label{eq:coercivity-u}\\
 &+
 c_0
 \int_{J_{u_\Omega}\cap B}
 \operatorname{dist}(x,\partial B)
 \bigl((u_\Omega^+)^2+(u_\Omega^-)^2\bigr)
 \,d\Hh^{n-1}.
 \notag
\end{align}

By \autoref{cor:outside-mass},
\[
    \int_{B^c}u_\Omega^2\,dx
    \leq C\,\delta_\beta(\Omega)^{1/2}.
\]
Since $\delta_\beta(\Om)\leq\delta_0$, \eqref{eq:coercivity-u} therefore implies
\begin{align*}
 \int_B\bar\phi^2|\nabla f_\Omega|^2\,dx
 &+
 \int_{J_{u_\Omega}\cap B}
 \operatorname{dist}(x,\partial B)
 \bigl((u_\Omega^+)^2+(u_\Omega^-)^2\bigr)
 \,d\Hh^{n-1}\leq C\,\delta_\beta(\Omega)^{1/2}.
\end{align*}
Finally, inside $B$ we have $\bar\phi=\phi$ and
\[
    u_\Omega^\pm=\phi f_\Omega^\pm
    \qquad\text{on }J_{f_\Omega}=J_{u_\Omega}.
\]
Using the two-sided bounds \eqref{eq:phi-bounds} for $\phi$, we obtain
\eqref{eq:coercivity-f}.
\end{proof}

Combining \autoref{prop:coercivity-f} with \autoref{prop:reduction}, the proof of \autoref{thm:main} is reduced to a Poincar\'e inequality controlling
\[
    \int_B |f_\Omega-\langle f_\Omega\rangle_B|^2\,dx
\]
by the two terms appearing on the left-hand side of
\eqref{eq:coercivity-f}.

\section{The Poincar\'e estimate}
\label{sec:weighted-poincare}

The aim of this section is to prove \eqref{eq:target-oscillation}. More precisely, we prove the following result. 
\begin{theorem}\label{thm:poincare-estimate}
Let $\beta>0$, $n\ge2$, and $V>0$. There exist
$\delta_0=\delta_0(n,\beta,V)>0$ and
$C=C(n,\beta,V)>0$ such that, for every open set
$\Omega\subset\mathbb R^n$ satisfying
\[
    |\Omega|=V,
    \qquad
    \delta_\beta(\Omega)\le\delta_0,
\]
for every ball $B$ achieving the Fraenkel asymmetry of $\Omega$ in the sense of \autoref{def:fraenkel-ball}, and every nonnegative, $L^2$-normalized first Robin eigenfunction $u_\Omega$, the quotient $f_\Omega=u_\Omega/\bar\phi$ satisfies
\begin{equation}\label{eq:poincare-estimate}
    \int_B
    |f_\Omega-\langle f_\Omega\rangle_B|^2\,dx
    \le
    C\,\delta_\beta(\Omega)^{1/2}.
\end{equation}
\end{theorem}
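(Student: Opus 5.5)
The plan is to show that $f_\Omega$ is close to a constant on $B$ using only the two quantities controlled in \autoref{prop:coercivity-f}: the bulk term $\int_B|\nabla f_\Omega|^2$ and the weighted jump term $\int_{J_{f_\Omega}\cap B}\operatorname{dist}(x,\partial B)((f_\Omega^+)^2+(f_\Omega^-)^2)\,d\Hh^{n-1}$, both of order $\delta:=\delta_\beta(\Omega)$ to the power $1/2$. I also use the uniform bound $\|f_\Omega\|_{L^\infty}\le C/m_B$ coming from \autoref{lem:linfty} and \eqref{eq:phi-bounds}. A direct $BV$ Poincaré inequality applied to $g=f_\Omega^2$, with the jump part bounded via $|g^+-g^-|\le (f^+)^2+(f^-)^2$ and the inner region $\{\operatorname{dist}(x,\partial B)>t\}$ separated from a boundary shell, only gives $C(t+\delta^{1/4}+\delta^{1/2}/t)$. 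Optimizing in $t$ yields the weaker bound $\delta^{1/4}$, so the loss must be avoided. Two features help. First, the jump penalty is on $(f^\pm)^2$ and not on $|f^+-f^-|$, so any crack inside the region where $f$ is not small costs perimeter. Second, the weight $\operatorname{dist}(x,\partial B)$ is exactly the right weight for a linear, not square-root, volume bound on pieces attached to $\partial B$.

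\textbf{Step 1 (a weighted isoperimetric inequality).} I first prove that for every set $E\subset B$ of finite perimeter,
\[
|E\cap\{\operatorname{dist}(\cdot,\partial B)<R/2\}|\le C\int_{\partial^*E\cap B}\operatorname{dist}(x,\partial B)\,d\Hh^{n-1}+C\,P(E;B_{R/2})\text{-type term}.
\]
The tool is the divergence theorem applied to $X(x)=-(R-|x|)\,x/|x|$. This field vanishes on $\partial B$, satisfies $|X|\le\operatorname{dist}(x,\partial B)$, and has $\operatorname{div}X\ge 1/2$ in the shell $\{R/2<|x|<R\$\}. In the interior ball $B_{R/2}$, where the weight is bounded below by $R/2$, I use the standard relative isoperimetric inequality instead. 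Together these give: a subset of $B$ of volume at most $|B|/2$ has volume bounded by $C$ times its weighted relative perimeter. The bound is linear, since a shell of width $t$ has weighted perimeter of order $t$.

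\textbf{Step 2 (splitting $f_\Omega$ into a Sobolev part and a small jump-separated part).} Fix a level $s_0=\frac12\langle f_\Omega\rangle_B$, which is bounded below because $\|u_\Omega\|_{L^2(B)}\ge 1/2$ by \autoref{cor:outside-mass}. Using the coarea formula for the $SBV$ function $g=f_\Omega^2$, I consider the superlevel sets $E_s=\{g>s\}$ for $s\in(s_0^2/2,s_0^2)$. The jump part of $\partial^*E_s$ is contained in $J_{f_\Omega}$, and there $(f^+)^2\ge s\gtrsim 1$. The diffuse part is handled by choosing a good level $s$ via Chebyshev, using $\int_B|\nabla g|\le C(\int_B|\nabla f|^2)^{1/2}$ \emph{only on a set of levels of small measure}. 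Rather than losing the square root, I plan to work with the truncations $\min\{f,s\}$. Then $\int_B|\nabla\min\{f,s\}|^2$ is still at most $C\delta^{1/2}$, and the jumps of $\min\{f,s\}$ are jumps across which $f$ stays above $s_0$ on at least one side. The set $D=\{f<s_0\}\cap B$ will then be shown, by Step 1, to have measure at most $C\delta^{1/2}$: its weighted boundary splits into jump portions, costing $\gtrsim$ the jump energy, and into portions where $f$ crosses between $s_0$ and $2s_0$ continuously. For the latter I use the $L^2$ gradient bound together with the isoperimetric profile of $D$, which is superlinear in the volume.

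\textbf{Step 3 (Poincaré on the good set and conclusion).} On $B\setminus D$, I apply the $SBV$ Poincaré inequality with small jump set of De Giorgi--Carriero--Leaci type, or a Caccioppoli-partition argument. The jump set of $f$ inside $\{f\ge s_0\}$ has weighted measure at most $C\delta^{1/2}$. By Step 1, every component of the partition other than the main one has volume at most $C\delta^{1/2}$. On the main component, the ordinary $L^2$ Poincaré inequality (after a reflection/extension near $\partial B$) gives oscillation at most $C\int|\nabla f|^2\le C\delta^{1/2}$. Since $f$ is bounded, the remaining pieces and $D$ contribute at most $C\delta^{1/2}$ to $\int_B|f-\langle f\rangle_B|^2$. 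Replacing the constant by the mean only costs a factor, and this gives \eqref{eq:poincare-estimate}. The threshold $\delta_0$ is chosen so that all the exceptional sets have volume at most $|B|/4$, which makes the main component well defined.

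\textbf{Main obstacle.} The main obstacle is Step 2/3: turning the weighted jump control into a \emph{linear} volume bound for the pieces cut off by $J_{f_\Omega}$, without passing through the $L^1$ $BV$ Poincaré inequality for $f^2$, which loses the square root. The first thing I would try is the coarea-plus-divergence argument with the field $X$ of Step 1, applied level by level to $\min\{f,s\}$. The gradient contribution would be treated separately through the $L^2$ Poincaré inequality on the main component, rather than through its total variation.
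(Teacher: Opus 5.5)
Your diagnosis of the square-root loss is correct. Step~1 is, via the coarea formula, equivalent to the weighted inequality the paper actually uses (\autoref{lem:weighted-BV-poincare} applied to $\one_E$). Quote it from there: your field has $\operatorname{div}X=1-(n-1)(R-|x|)/|x|$, which equals $2-n$ at $|x|=R/2$, and you ignore the flux through the inner sphere.

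The genuine gap is in Steps~2--3, where the whole difficulty lies; neither contains a working mechanism.
\begin{itemize}
\item In Step~2 the claim about $\min\{f,s\}$ is false. Truncating from above leaves untouched every jump whose two traces are both below $s_0$; only truncation from \emph{below} lifts both traces.
\item The only quantitative input you name is $\int_{\{s_0<f_\Omega<2s_0\}}|\nabla f_\Omega|\le|B|^{1/2}\|\nabla f_\Omega\|_{L^2(B)}\le C\delta^{1/4}$. Fed into Step~1 at a good level, it gives $|D|\le C\delta^{1/4}$, which is exactly the loss you wanted to avoid.
\item The unweighted (superlinear) isoperimetric profile is not usable here. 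The jump part of $\partial^*D$ is controlled only with the weight $\operatorname{dist}(x,\partial B)$, which degenerates at $\partial B$.
\item Step~3 fails as stated. $J_{f_\Omega}$ need not be closed, and its $\Hh^{n-1}$-measure is controlled only with that weight, and only where a trace is not small. So there is no canonical Caccioppoli partition.
\item The De Giorgi--Carriero--Leaci inequality needs small \emph{unweighted} $\Hh^{n-1}(J_f)$, which you do not have.
\item Even for a genuine partition, the main component is merely a set of finite perimeter. On such a set no uniform $L^2$ Poincar\'e inequality, and no reflection, is available.
\end{itemize}

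The missing idea is to center at a median \emph{before} squaring; this makes Steps~2--3 unnecessary.
\begin{itemize}
\item Choose $\theta>0$ with $\langle f_\Omega\rangle_B\ge4\theta$, set $h=\max\{f_\Omega,\theta\}$, and let $a$ be a median of $h$, so $\theta\le a\le C$.
\item On $\{f_\Omega<\theta\}$ one has $|f_\Omega-\langle f_\Omega\rangle_B|\ge3\theta$. Chebyshev then gives $\|h-f_\Omega\|_{L^2(B)}\le\frac13\|f_\Omega-\langle f_\Omega\rangle_B\|_{L^2(B)}$. Thus the low set is absorbed rather than estimated, and it suffices to bound $\int_B|h-a|^2$.
\item Apply the weighted inequality to $w=(h-a)|h-a|$, which has median $0$ and satisfies $|w|=|h-a|^2$.
\item The absolutely continuous part of $|Dw|$ contributes at most $2R\|h-a\|_{L^2(B)}\|\nabla h\|_{L^2(B)}$, which Young's inequality absorbs. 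The factor in front of $\|\nabla h\|_{L^2}$ is now the unknown itself, not $\|f_\Omega\|_{L^2}\sim1$ as for $g=f_\Omega^2$; this is what removes the square-root loss.
\item On $J_h$ one has $|w^+-w^-|\le C((h^+)^2+(h^-)^2)$, because $h^\pm\ge\theta$ and $a\le C$.
\item Finally use $|\nabla h|\le|\nabla f_\Omega|$, $J_h\subset J_{f_\Omega}$ and $(h^+)^2+(h^-)^2\le2((f_\Omega^+)^2+(f_\Omega^-)^2)$. Then \autoref{prop:coercivity-f} yields $\int_B|h-a|^2\le C\delta_\beta(\Omega)^{1/2}$.
\end{itemize}
In your level-set language: the linear bound of Step~1 must be integrated over all levels against the weight $|t-a|$, not used at a single good level.
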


In order to prove this result we need the following lemma.

\begin{lemma}\label{lem:weighted-BV-poincare}
Let $B=B_R\subset\R^n$. There exists $C=C(n)>0$ such that, for every
$w\in BV(B)$,
\begin{equation}\label{eq:weighted-BV-poincare}
    \inf_{c\in\R}\int_B|w-c|\,dx
    \leq
    C
    \int_B
    \operatorname{dist}(x,\partial B)\,d|Dw|(x).
\end{equation}
\end{lemma}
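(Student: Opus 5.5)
By scaling we may take $R=1$. The inequality is invariant under $x\mapsto Rx$: the left side scales like $R^n$, and the right side like $R\cdot R^{n-1}$ (one factor $R$ from the distance, $R^{n-1}$ from $|Dw|$). So it suffices to prove it on the unit ball $B=B_1$. We argue by a layer-cake/coarea reduction to sets of finite perimeter, followed by a weighted isoperimetric inequality for the weight $d(x):=\operatorname{dist}(x,\partial B)=1-|x|$.

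\emph{Step 1 (reduction to sets).} Take $c$ to be a median of $w$ on $B$. Then $\int_B|w-c|\,dx=\int_{-\infty}^{\infty}\min\{|E_t|,|B\setminus E_t|\}\,dt$ (up to the choice of sides), where $E_t=\{w>t\}$. By the coarea formula for $BV$ functions, $\int_B d\,d|Dw|=\int_{-\infty}^\infty\int_{\partial^*E_t\cap B}d\,d\Hh^{n-1}\,dt$. It is therefore enough to prove the weighted relative isoperimetric inequality
\[
\min\{|E|,|B\setminus E|\}\le C\int_{\partial^*E\cap B}d\,d\Hh^{n-1}
\]
for every set $E\subset B$ of finite perimeter in $B$.

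\emph{Step 2 (weighted isoperimetry).} We aim to prove the inequality above through the $BV$ dual formulation, building a test field. For $E$ with $|E|\le|B|/2$ we may assume the right-hand side is small; otherwise the claim is trivial since $|E|\le|B|$. We decompose $B$ into the core $B_{1/2}$ and dyadic shells $A_k=\{2^{-k-1}<d<2^{-k}\}$. Each shell is covered by boundedly overlapping balls $Q$ of radius comparable to $2^{-k}$, on which $d\simeq 2^{-k}$ (a Whitney decomposition). On each Whitney ball the standard relative isoperimetric inequality gives
\[
\min\{|E\cap Q|,|Q\setminus E|\}\le C\,|Q|^{1/n}P(E;Q)\simeq C\int_{\partial^*E\cap Q}d\,d\Hh^{n-1}.
\]
This holds because $|Q|^{1/n}\simeq 2^{-k}\simeq d$ on $Q$, and we use $\min\{a,b\}\le a^{1/n'}\cdots$ in the form $\min\{|E\cap Q|,|Q\setminus E|\}^{(n-1)/n}\le C P(E;Q)$ together with $\min\le |Q|$. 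Summing over the Whitney balls gives the local control. We then have to pass from local to global: we show that $E$ is essentially full or essentially empty consistently across the chain of Whitney balls. The plan is the standard chaining argument for Boman/John domains. Let $s$ be the proportion of $E$ in the central ball $B_{1/2}$. Adjacent Whitney balls have comparable size and overlap in sets of measure comparable to their own, so their majority labels can differ only if the weighted perimeter in their union is at least $c|Q|$. Consequently the measure of the set where the local majority label disagrees with the central one is bounded by $C\sum_Q|Q|\,\one_{\text{label change near }Q}$. This sum is in turn controlled by the weighted perimeter, because each ball of the Boman chain is charged with the shadow $\{Q'\subset \text{cone below } Q\}$, whose total measure is $\lesssim |Q|$. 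The measure of a cone of shells below a ball of size $2^{-k}$ is geometrically summable.

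\emph{Main obstacle.} The delicate point is this summation: the shadow of a ball at scale $2^{-k}$ has measure of order $2^{-kn}$, comparable to $|Q|$, and not larger. This is exactly why the weight $d^1$ (and not a higher power) is admissible. I expect to use the Boman chain condition with a bounded overlap count and no loss. As a fallback, I would invoke the known result that John domains support $(1,1)$ Poincaré inequalities with weight $d^{\alpha}$ for $\alpha\le1$. Alternatively, for the ball one can integrate radially: write $w$ in polar coordinates, use the one-dimensional inequality $\int_0^1|g-g(r_0)|\,dr\le\int_0^1(1-r)\,|Dg|\,\cdot$ (Hardy type) along rays, and combine it with the unweighted Poincaré inequality on $B_{1/2}$ to compare the ray values with a common constant.
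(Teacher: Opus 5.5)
Your argument is correct in outline, but it follows a genuinely different route from the paper.

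\textbf{The paper's route.} The paper does not prove the weighted inequality itself. It quotes the Drelichman--Dur\'an weighted Poincar\'e inequality for locally Lipschitz functions and then passes to $BV(B)$ by strict approximation with $w_j\in C^\infty(B)\cap W^{1,1}(B)$. Reshetnyak's continuity theorem is used to pass to the limit on the right-hand side, since the weight $\operatorname{dist}(x,\partial B)$ is bounded and continuous on $B$.

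\textbf{Your route.} You work directly at the $BV$ level. With $c$ a median, the layer-cake identity and the coarea formula reduce everything to the weighted relative isoperimetric inequality $\min\{|E|,|B\setminus E|\}\le C\int_{\partial^*E\cap B}\operatorname{dist}(x,\partial B)\,d\Hh^{n-1}$, so no approximation step is needed. You then essentially reprove the cited result for the ball, using Whitney balls and a Boman chain argument. The bookkeeping is right: a label change between consecutive balls at scale $2^{-k}$ costs at least $c|Q|^{(n-1)/n}2^{-k}\simeq 2^{-kn}$ of weighted perimeter, while the shadow it must pay for has measure $\simeq 2^{-kn}$. This also explains why the first power of the distance is the critical one.

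\textbf{What each buys.} The paper's route is short because it outsources the core estimate. Yours gives a self-contained geometric proof that handles $BV$ with no density argument.

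\textbf{Points to tighten.} When you write it out, fix the chains radially inward along a nested, tree-like Whitney family. Then charge each ball $Q$ whose majority label differs from the core label to a label change anywhere along its chain to the core, not one ``near $Q$''. Double counting over chain edges, together with bounded overlap, then closes the estimate. Your remark about a dual formulation with a test field is never used and can be dropped.

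\textbf{The radial fallback.} This is in fact the most elementary proof for the ball. The bound $\int_\rho^1|g(r)-g(\rho)|\,r^{n-1}dr\le\int_\rho^1(1-s)|g'(s)|\,ds$ along rays, averaged over $\rho\in[1/4,1/2]$ and combined with the unweighted Poincar\'e inequality on $B_{1/2}$, gives the estimate for smooth $w$. For $BV$, however, it needs the same approximation step as the paper, or a slicing argument.
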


\begin{proof}
By
\cite[Theorem~3.2]{DRELICHMAN2008286}, for every locally Lipschitz $w\in L^1(B)$
\begin{equation} \label{poinc weighted in L1}  \inf_{c\in\R}\int_B|w-c|\,dx
    \leq
    C
    \int_B
    \operatorname{dist}(x,\partial B)|\nabla w|\,dx.
\end{equation}
Let now $w\in BV(B)$. By the  approximation theorem
\cite[Theorem~3.9]{AmbrosioFuscoPallara2000}, there exists
$w_j\in C^\infty(B)\cap W^{1,1}(B)$ such that
\[
    w_j\to w
    \quad\text{in }L^1(B),
    \qquad
    \int_B|\nabla w_j|\,dx\longrightarrow |Dw|(B).
\]
 By Reshetnyak's continuity
theorem \cite[Theorem~2.39]{AmbrosioFuscoPallara2000},
\[
    \int_B
    \operatorname{dist}(x,\partial B)|\nabla w_j|\,dx
    \longrightarrow
    \int_B
    \operatorname{dist}(x,\partial B)\,d|Dw|.
\]

Moreover, 
\[
    \left|
    \inf_{c\in\R}\|w_j-c\|_{L^1(B)}
    -
    \inf_{c\in\R}\|w-c\|_{L^1(B)}
    \right|
    \leq
    \|w_j-w\|_{L^1(B)}
    \longrightarrow0.
\]
Hence, passing to the limit in \eqref{poinc weighted in L1} proves \eqref{eq:weighted-BV-poincare}.
\end{proof}

\begin{remark}\label{rem:minimizing-constants}
We recall the following elementary properties of \(L^1\) and \(L^2\) minimization over constants.

First, if $g\in L^2(B)$, then for every $c\in\R$,
\begin{equation}\label{eq:L2-best-constant}
    \int_B|g-c|^2\,dx
    =
    \int_B|g-\langle g\rangle_B|^2\,dx
    +
    |B|\,|c-\langle g\rangle_B|^2.
\end{equation}
In particular, $\langle g\rangle_B$ minimizes the $L^2$ distance of
$g$ from the constants.

Second, if $w\in L^1(B)$ satisfies
\begin{equation}\label{eq:zero-median-condition}
    |\{w>0\}|\leq\frac{|B|}{2},
    \qquad
    |\{w<0\}|\leq\frac{|B|}{2},
\end{equation}
then
\begin{equation}\label{eq:L1-zero-minimizer}
    \int_B|w|\,dx
    =
    \inf_{c\in\R}\int_B|w-c|\,dx.
\end{equation}
Indeed, if $c>0$, then
\[
\begin{aligned}
\int_B\bigl(|w-c|-|w|\bigr)\,dx
&\geq
c|\{w\leq0\}|-c|\{w>0\}|
\\
&=
c\bigl(|B|-2|\{w>0\}|\bigr)
\geq0.
\end{aligned}
\]
The case $c<0$ follows in the same way, using the second inequality in
\eqref{eq:zero-median-condition}. Since equality is attained for $c=0$,
\eqref{eq:L1-zero-minimizer} follows.
\end{remark}

\begin{proof}[Proof of \autoref{thm:poincare-estimate}]
By \autoref{lem:linfty} and \eqref{eq:phi-bounds}, after possibly reducing
$\delta_0$, we have
\begin{equation}\label{eq:f-uniform-bound}
    0\leq f_\Omega\leq C
    \qquad\text{a.e. in }B,
\end{equation}
where, here and in the following, $C$ denotes a positive constant
depending only on $n,\beta, V$ and $\delta_0$.

We first observe that by \eqref{eq:outside-mass} and the normalization of
$u_\Omega$,
\[
    \int_B u_\Omega^2\,dx
    =
    1-\int_{B^c}u_\Omega^2\,dx
    \geq
    1-C\delta_\beta(\Omega)^{1/2}.
\]

Using $u_\Omega=\phi f_\Omega$ in $B$, together with
\eqref{eq:phi-bounds} and \eqref{eq:f-uniform-bound}, we obtain that for $\delta_0$ sufficiently small there exists a constant $c=c(n,\beta,V,\delta_0)$ such that 
\[
    c
    \leq
    \int_B f_\Omega^2\,dx
    \leq
    C\int_B f_\Omega\,dx.
\]
Therefore
\begin{equation}\label{eq:f-average-lower}
    \langle f_\Omega\rangle_B\geq c>0.
\end{equation}
Fix $\theta>0$, depending only on $n,\beta$ and $V$, sufficiently small
so that
\[
    \langle f_\Omega\rangle_B\geq4\theta,
\]
and set
\[
    h:=\max\{f_\Omega,\theta\}.
\]
Since $f_\Omega\in SBV^{1/2}(B)$, one has $f_\Omega^2\in SBV(B)$, hence the $BV$ chain rule
\cite[Theorem~3.99]{AmbrosioFuscoPallara2000} (see  \cite[Lemma 3.2]{BucurGiacomini2010} for more details) gives
$h\in SBV(B)$.

On $\{f_\Omega<\theta\}$,
\eqref{eq:f-average-lower} gives
\[
    |f_\Omega-\langle f_\Omega\rangle_B|
    \geq
    \langle f_\Omega\rangle_B-f_\Omega
    \geq3\theta.
\]
Thus, squaring and integrating, we have
\[
    |\{f_\Omega<\theta\}|
    \leq\frac{1}{9\theta^2}
    \int_{\{f_\Omega<\theta\}}|f_\Omega-\langle f_\Omega\rangle_B|^2\,dx\leq
    \frac{1}{9\theta^2}
    \int_B|f_\Omega-\langle f_\Omega\rangle_B|^2\,dx.
\]
Since $0\leq h-f_\Omega\leq\theta$ on $\{f_\Omega<\theta\}$ and $h=f_\Omega$ elsewhere,
\begin{equation}\label{eq:truncation-error}
    \|h-f_\Omega\|_{L^2(B)}
    \leq \theta|\{f_\Omega<\theta\}|^{\frac{1}{2}}\leq
    \frac13
    \|f_\Omega-\langle f_\Omega\rangle_B\|_{L^2(B)}.
\end{equation}

We now choose $a\in\R$ such that
\begin{equation}\label{eq:half-level}
    |\{h>a\}|\leq\frac{|B|}{2},
    \qquad
    |\{h<a\}|\leq\frac{|B|}{2}.
\end{equation}
Since $\theta\leq h\leq C$,
\[
    \theta\leq a\leq C.
\]
By \eqref{eq:L2-best-constant},
\[
    \|f_\Omega-\langle f_\Omega\rangle_B\|_{L^2(B)}
    \leq
    \|f_\Omega-a\|_{L^2(B)}.
\]
Using \eqref{eq:truncation-error}, we obtain
\[
\begin{aligned}
    \|f_\Omega-\langle f_\Omega\rangle_B\|_{L^2(B)}
    &\leq
    \|f_\Omega-h\|_{L^2(B)}
    +
    \|h-a\|_{L^2(B)}
\\
    &\leq
    \frac13
    \|f_\Omega-\langle f_\Omega\rangle_B\|_{L^2(B)}
    +
    \|h-a\|_{L^2(B)}.
\end{aligned}
\]
Therefore
\begin{equation}\label{eq:f-h-comparison}
    \int_B|f_\Omega-\langle f_\Omega\rangle_B|^2\,dx
    \leq
    C\int_B|h-a|^2\,dx.
\end{equation}

Hence, it remains to control the right-hand side. Consider
\[
    w:=(h-a)|h-a|.
\]
Since $h$ is bounded and belongs to $SBV(B)$, the $BV$ chain rule gives
$w\in SBV(B)$. Moreover,
\[
    |w|=|h-a|^2,
\]
and, by \eqref{eq:half-level},
\[
    |\{w>0\}|\leq\frac{|B|}{2},
    \qquad
    |\{w<0\}|\leq\frac{|B|}{2}.
\]
Thus \eqref{eq:L1-zero-minimizer} and
\autoref{lem:weighted-BV-poincare} give
\begin{equation}\label{eq:w-poincare}
    \int_B|h-a|^2\,dx
    \leq
    C\int_B
    \operatorname{dist}(x,\partial B)\,d|Dw|.
\end{equation}

For the absolutely continuous part of $Dw$,
\[
    |\nabla w|
    =
    2|h-a||\nabla h|
    \qquad\text{a.e. in }B.
\]
Therefore,
\[
\begin{aligned}
\int_B
\operatorname{dist}(x,\partial B)|\nabla w|\,dx
&\leq
2R\int_B|h-a||\nabla h|\,dx
\\
&\leq
2R
\left(\int_B|h-a|^2\,dx\right)^{1/2}
\left(\int_B|\nabla h|^2\,dx\right)^{1/2}.
\end{aligned}
\]

On the jump set,
\[
    w^\pm=(h^\pm-a)|h^\pm-a|.
\]
Since $\theta\leq h^\pm$ and $\theta\leq a\leq C$, we have
\[
    |w^\pm|
    =
    |h^\pm-a|^2
    \leq
    C(h^\pm)^2.
\]
Hence
\[
    |w^+-w^-|
    \leq
    C\bigl((h^+)^2+(h^-)^2\bigr).
\]
Using these two estimates in \eqref{eq:w-poincare} gives
\[
\begin{aligned}
    \int_B|h-a|^2\,dx
    \leq{}&
    C
    \left(\int_B|h-a|^2\,dx\right)^{1/2}
    \left(\int_B|\nabla h|^2\,dx\right)^{1/2}
\\
    &+
    C
    \int_{J_h\cap B}
    \operatorname{dist}(x,\partial B)
    \bigl((h^+)^2+(h^-)^2\bigr)\,d\Hh^{n-1}.
\end{aligned}
\]
Using Young's inequality in the first term of the righthand side,
\begin{equation}\label{eq:h-oscillation}
\begin{aligned}
    \int_B|h-a|^2\,dx
    \leq C\Bigg[
    &\int_B|\nabla h|^2\,dx
\\
    &+
    \int_{J_h\cap B}
    \operatorname{dist}(x,\partial B)
    \bigl((h^+)^2+(h^-)^2\bigr)\,d\Hh^{n-1}
    \Bigg],
\end{aligned}
\end{equation}
where the radius $R$ has been absorbed into the constant.

Finally,
\[
    |\nabla h|\leq|\nabla f_\Omega|
    \qquad\text{a.e. in }B,
\]
and $J_h\subset J_{f_\Omega}$. Moreover, At every point of \(J_h\), one of the following three cases occurs:
\begin{itemize}
    \item $h^+=f_\Omega^+$ and $h^-=f_\Omega^-$;
    \item $h^+=f_\Omega^+>\theta$ and $h^-=\theta$;
    \item $h^+=\theta$ and $h^-=f_\Omega^->\theta$.
\end{itemize}
In any case,

\[
    (h^+)^2+(h^-)^2
    \leq
    2\bigl((f_\Omega^+)^2+(f_\Omega^-)^2\bigr)
    \qquad\text{on }J_h.
\]
Combining \eqref{eq:f-h-comparison}, \eqref{eq:h-oscillation} and
\eqref{eq:coercivity-f}, we conclude that
\[
    \int_B|f_\Omega-\langle f_\Omega\rangle_B|^2\,dx
    \leq
    C\,\delta_\beta(\Omega)^{1/2}.
\]
This proves \eqref{eq:target-oscillation}.
\end{proof}

\section{Proof of the main result}\label{sec:proof-main}
We are now ready to prove \autoref{thm:main}.

\begin{proof}[Proof of \autoref{thm:main}]

Since 
\[\|u_\Omega-u_B\|_{L^2(\mathbb R^n)}
    \leq2,\]
it is sufficient to prove the thesis when $\delta_\beta(\Omega)$ is sufficiently small. 
Thus we assume that \(\delta_\beta(\Omega)\leq\delta_0\), where
\(\delta_0=\delta_0(n,\beta,V)>0\) is the constant given by
\autoref{thm:poincare-estimate}. By that result, the quotient
\[
    f_\Omega:=\frac{u_\Omega}{\bar\phi}
\]
satisfies
\[
    \int_B
    \bigl|f_\Omega-\langle f_\Omega\rangle_B\bigr|^2\,dx
    \leq
    C\,\delta_\beta(\Omega)^{1/2}.
\]
Therefore \eqref{eq:target-oscillation} holds, and
\autoref{prop:reduction} yields
\[
    \|u_\Omega-u_B\|_{L^2(\mathbb R^n)}
    \leq
    C\,\delta_\beta(\Omega)^{1/4}.
\]
We are only left to prove the sharpness of the exponent $1/4$, which can be proven by considering the family of volume-preserving ellipsoidal perturbations of the ball considered in \cite[Section 4]{BucurFeroneNitschTrombetti2018}. We provide details in \autoref{subsection sharpness}

\end{proof}

\subsection{Checking the sharpness}\label{subsection sharpness}

The exponent $1/4$ in \eqref{eq:main-estimate} is optimal,
even among smooth connected domains. This follows from
the ellipsoidal perturbations used in
\cite[Section~4]{BucurFeroneNitschTrombetti2018}
to establish the sharpness of the quantitative Robin
Faber--Krahn inequality. We provide the details for completeness.

Let $B_R$ be the ball centered at the origin with volume $V$,
and consider the family of ellipsoids

\[
E_t=T_tB_R,
\qquad
T_t=\operatorname{diag}(e^t,e^{-t},1,\ldots,1),
\]
for $|t|$ sufficiently small. Since $\det T_t=1$, we have $|E_t|=V$.

We first show that there exists a constant $c$ such that 
\begin{equation}\label{eq:ellipsoid-asymmetry}
\mathcal A(E_t)\geq c|t|.
\end{equation}
Since both $E_t$ is convex and centrally symmetric, the ball $B_R(0)$ achieves the Fraenkel asymmetry of $E_t$ in the sense of \autoref{def:fraenkel-ball}, and $\mathcal A(E_t)=|E_t\triangle B_R|/V$.

Writing $x=r\theta$, where $r=|x|$ and $\theta=x/|x|\in\mathbb S^{n-1}$ for $x\neq0$, we can express the ellipsoid as
\[
E_t=\left\{
r\theta:\theta\in\mathbb S^{n-1},
\ 0\leq r<r_t(\theta)
\right\},
\]
where
\[
r_t(\theta)
=\frac{R}{\sqrt{e^{-2t}\theta_1^2+
e^{2t}\theta_2^2+\sum_{i=3}^n\theta_i^2}}.
\]

Using the expansions of $e^{\pm2t}$, we obtain
\[
r_t(\theta)
=R+Rt(\theta_1^2-\theta_2^2)+O(t^2),
\]
uniformly in $\theta$. Integration in polar coordinates then gives
\[
\begin{aligned}
|E_t\triangle B_R|
&=\frac1n\int_{\mathbb S^{n-1}}
|r_t(\theta)^n-R^n|\,d\theta=R^n|t|\int_{\mathbb S^{n-1}}
|\theta_1^2-\theta_2^2|\,d\theta+O(t^2).
\end{aligned}
\]
Since the last integral is strictly positive, \eqref{eq:ellipsoid-asymmetry} follows.

We now prove that
\begin{equation}\label{eq:ellipsoid-deficit}
\delta_\beta(E_t)\leq Ct^2.
\end{equation}
The argument is analogous to the ellipsoidal construction
in \cite[Section~2.6]{BrascoDePhilippis2017}, with the
additional Robin boundary term.

Let $u_{B_R}$ be the positive,
$L^2$-normalized first Robin eigenfunction of $B_R$,
and use $v_t=u_{B_R}\circ T_t^{-1}$ as a test function
in \eqref{eq:intro-robin-quotient}. Since $\det T_t=1$,

\begin{equation}\label{eq:ellipsoid-rayleigh}
\begin{aligned}
\lambda_{1,\beta}(E_t)
&\leq
\int_{B_R}|T_t^{-T}\nabla u_{B_R}|^2\,dx+\beta\int_{\partial B_R}u_{B_R}^2
|T_t^{-T}\nu|\,d\mathcal H^{n-1}.
\end{aligned}
\end{equation}

We point out that the right-hand side is regular and even in $t$, since replacing
$t$ by $-t$ amounts to interchanging $\theta_1$
and $\theta_2$, which leaves the integrals
unchanged.

As a consequence, the first derivative of the highhandedness at $t=0$ 
vanish. Thus using Taylor's formula and that $E_0=B_R$,
we conclude that
\[
\lambda_{1,\beta}(E_t)
\leq\lambda_{1,\beta}(B_R)+Ct^2.
\]
Hence,

\[
0\leq\delta_\beta(E_t)\leq Ct^2,
\]
proving \eqref{eq:ellipsoid-deficit}.

Finally, applying \eqref{eq:intro-asymmetry-control}
to $E_t$ and using \eqref{eq:ellipsoid-asymmetry},
we obtain
\[
\|u_{E_t}-u_{B_R}\|_{L^2(\mathbb R^n)}^2
\geq c\,\mathcal A(E_t)
\geq c|t|.
\]

If \eqref{eq:main-estimate} held with an exponent
$\alpha>1/4$, then \eqref{eq:ellipsoid-deficit}
would imply
\[
c|t|^{1/2}
\leq C\delta_\beta(E_t)^\alpha
\leq C|t|^{2\alpha},
\]
which is impossible as $t\to0$, since
$2\alpha>1/2$. This proves the optimality
of the exponent $1/4$.

\section{Nearly spherical sets}
\label{sec:nearly-spherical}

We conclude by considering smooth perturbations of the ball. In this regime, we obtain a stronger stability estimate than in the general case.

Throughout this section, $B=B_R(0)$ is the ball of volume $V$,
\[
    \lambda_B:=\lambda_{1,\beta}(B),
\]
and $u_B$ denotes its positive first Robin eigenfunction, normalized in
$L^2(B)$.

In the following we will use the notations introduced in \autoref{def:nearly-spherical} for nearly spherical sets.

Moreover, we set
\[
\lambda_\xi:=\lambda_{1,\beta}(B^\xi)
\]
and denote by $u_\xi$ the positive first Robin eigenfunction of $B^\xi$,
normalized by
\[
\int_{B^\xi}u_\xi^2\,dx=1.
\]

The following local stability estimate holds.
\begin{lemma}\label{lem:local-coercivity}
Let $n\geq 2$, $\beta,V>0$, and
$\gamma\in(0,1)$.
There exist constants
$\eta_0=\eta_0(n,\beta,V,\gamma)>0$
and $c=c(n,\beta,V,\gamma)>0$
such that, for every
$B^\xi\in\mathcal S_{\eta_0,\gamma}(B)$,
\begin{equation}\label{eq:local-coercivity}
\lambda_{1,\beta}(B^\xi)
-\lambda_{1,\beta}(B)
\geq c\|\xi\|_{H^1(\partial B)}^2.
\end{equation}
\end{lemma}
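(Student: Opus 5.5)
This is a Fuglede-type second-variation argument. We treat $\xi\mapsto\lambda_{1,\beta}(B^\xi)$ as a functional on $C^{2,\gamma}(\partial B)$ near $\xi=0$. We expand it to second order and show that the quadratic part is coercive in $H^1(\partial B)$ on the admissible directions. The remainder is then controlled by $\eta_0\|\xi\|_{H^1}^2$.

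\textbf{Step 1: differentiability.} We pull the problem back to $B$ through $\Psi_\xi$, which is a $C^{2,\gamma}$ diffeomorphism for $\eta_0$ small. The eigenvalue problem on $B^\xi$ becomes a Robin problem on $B$ whose coefficients depend smoothly (analytically) on $\xi\in C^{2,\gamma}$. Since $\lambda_B$ is simple, the implicit function theorem, or Kato perturbation theory, gives that $\xi\mapsto\lambda_\xi$ and $\xi\mapsto u_\xi\circ\Psi_\xi$ are smooth. Hence
\[
\lambda_\xi=\lambda_B+\lambda'(0)[\xi]+\tfrac12\lambda''(0)[\xi,\xi]+o(\|\xi\|_{C^{2,\gamma}})\|\xi\|_{H^1(\partial B)}^2 .
\]
The remainder estimate in terms of the weaker $H^1$ norm is the delicate point. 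Following the approach of \cite{DL}, we would obtain it by showing that $\lambda''$ is continuous at $0$ with respect to the $C^{2,\gamma}$ topology, as a bilinear form on $H^1(\partial B)$, and then applying Taylor's formula with integral remainder.

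\textbf{Step 2: first and second variation on the ball.} The Hadamard formula for Robin eigenvalues reads
\[
\lambda'(0)[\xi]=\int_{\partial B}\bigl(|\nabla u_B|^2-\lambda_B u_B^2+\beta H u_B^2-2\beta^2u_B^2\bigr)\xi\,d\mathcal H^{n-1}.
\]
On $\partial B$ the integrand is a constant times $\xi$, so $\lambda'(0)[\xi]=k\int_{\partial B}\xi$. The volume constraint $|B^\xi|=|B|$ gives $\int_{\partial B}\xi=-\frac{n-1}{2R}\int_{\partial B}\xi^2+O(\|\xi\|_\infty\|\xi\|_{L^2}^2)$. The first-order term is therefore actually quadratic, and it is absorbed into an effective second variation. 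We then compute $\lambda''(0)[\xi,\xi]$ by expanding $\xi$ in spherical harmonics, $\xi=\sum_k a_kY_k$. The shape derivative $u'$ solves $-\Delta u'=\lambda_Bu'+\lambda'u_B$ in $B$ with a Robin-type boundary condition driven by $\xi$. Mode by mode, $u'$ is $a_kY_k(\theta)g_k(r)$ with $g_k$ an explicit Bessel-type solution. This yields
\[
\lambda''(0)[\xi,\xi]+\text{(volume correction)}=\sum_k \mu_k a_k^2,
\]
where $\mu_k$ is computed from $g_k$, $q$, $\beta$ and $R$.

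\textbf{Step 3: coercivity.} By the volume constraint, the mode $k=0$ is quadratic-order small. By the barycenter condition $\int_{B^\xi}x=0$, the $k=1$ modes satisfy $a_1=O(\|\xi\|_\infty\|\xi\|_{L^2})$. It then remains to prove $\mu_k\ge c(1+k(k+n-2))$ for $k\ge2$, which gives $\sum_{k\ge2}\mu_ka_k^2\ge c\|\xi\|_{H^1}^2$. Strict positivity of every $\mu_k$ with $k\ge2$ follows, after a compactness argument, from the sharp quantitative inequality \autoref{thm:qfk} applied along $t\xi_k$: for small $t$ we have $\delta_\beta\ge ct^2$, since $\mathcal A(B^{t\xi_k})\sim t\|\xi_k\|_{L^1}$. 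Obtaining the growth in $k$ needed for the $H^1$ norm rather than the $L^2$ norm requires the asymptotics of $\mu_k$. For large $k$ we expect $g_k$ to behave like $r^k$ near $r=R$, which makes the boundary term behave like $\beta k$ together with a tangential-gradient term from the Robin surface energy. We expect this asymptotic analysis to be the main obstacle; we would first attempt it through the Riccati quantity $g_k'/g_k$, comparing it with the equation for $q$ in \autoref{lem:q}. Combining Steps 1–3 and choosing $\eta_0$ so that the remainder is at most $\frac c2\|\xi\|_{H^1}^2$ yields \eqref{eq:local-coercivity}.
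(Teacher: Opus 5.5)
Your architecture matches the paper's. The paper proves the lemma by applying \cite[Theorem~1.3]{DL}, taking the second variation at the ball from \cite[Section~7.2.3]{BW} and the improved continuity from \cite[Proposition~4.4]{CV}. The gap is in Step~1, which you only announce. Smoothness of $\xi\mapsto\lambda_\xi$ on $C^{2,\gamma}(\partial B)$ gives a remainder $o(\|\xi\|_{C^{2,\gamma}}^2)$, and this cannot be absorbed by $c\|\xi\|_{H^1(\partial B)}^2$. What you need is the improved continuity estimate $|\lambda''(s\xi)[\xi,\xi]-\lambda''(0)[\xi,\xi]|\le\omega(\|\xi\|_{C^{2,\gamma}})\|\xi\|_{H^1(\partial B)}^2$ for $s\in[0,1]$, with $\omega(0^+)=0$. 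This is also the correct form of your remainder, rather than $o(\|\xi\|_{C^{2,\gamma}})\|\xi\|_{H^1}^2$. To establish it, write $\lambda''(\zeta)[\xi,\xi]$ for the problem pulled back to $B$. Then show that each term is controlled by $\|\xi\|_{H^1(\partial B)}^2$, with coefficients depending continuously on $\zeta\in C^{2,\gamma}$. The terms are the boundary terms with $|\nabla_\tau\xi|^2$ and curvature factors, and the term with the shape derivative $u'_\zeta[\xi]$. In particular you need $\|u'_\zeta[\xi]-u'_0[\xi]\|_{H^1(B)}\le\omega(\|\zeta\|_{C^{2,\gamma}})\|\xi\|_{H^1(\partial B)}$ after pull-back. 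This is the technical core of the Dambrine--Lamboley method, and it is exactly what the paper imports from \cite{CV}. Unless you prove or cite it, Steps~1--3 do not yield \eqref{eq:local-coercivity}.

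Conversely, the large-$k$ growth is easy, even though your text leaves it as an expectation and calls it the main obstacle. Since $u_B\equiv m_B$ on $\partial B$, the Robin term contributes $\beta m_B^2$ times the second variation of $\Hh^{n-1}(\partial B^\xi)$. That is $\beta m_B^2\int_{\partial B}|\nabla_\tau\xi|^2\,d\Hh^{n-1}$, up to $O(\|\xi\|_{L^2(\partial B)}^2)$. Every other contribution is $O(\|\xi\|_{H^{1/2}(\partial B)}^2)$. This includes the non-positive term from $u'$, which in mode $k$ is a fixed multiple of $a_k^2/(\beta+g_k'(R)/g_k(R))$. Here $\beta+g_k'(R)/g_k(R)>0$, because $\lambda_B$ lies strictly below the Robin eigenvalues of $B$ in degree $k\ge1$, and it grows like $k/R$. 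Hence, with $L^2(\partial B)$-normalized $Y_k$, $\mu_k=\beta m_B^2k(k+n-2)/R^2+O(k)$, and only finitely many $k\ge2$ need a sign check. On that point your route genuinely differs from the paper. The paper reads positivity off the explicit formula of \cite{BW}; you instead obtain $\mu_k>0$ from \autoref{thm:qfk} along a volume-corrected $tY_k$. This uses $\mathcal A(B^{tY_k})\ge c|t|$, which holds because $Y_k$ is orthogonal to affine functions for $k\ge2$. Your argument is valid and needs only $C^2$ smoothness along a fixed smooth direction, with no improved continuity. It also avoids Bessel-function computations. The cost is reliance on the deep estimate \eqref{eq:qfk}, which the paper uses elsewhere anyway, and non-explicit constants.
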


The estimate follows from \cite[Theorem 1.3]{DL}, using the second variation of the first Robin eigenvalue computed in \cite[Section 7.2.3]{BW} and the improved continuity result of \cite[Proposition 4.4]{CV}.

We now recall the shape calculus tools used below; see \cite[Chapter 5]{HP18shape} for a general reference.

Let $\xi\in C^{2,\gamma}(\partial B)$ and let $h_\xi$ be the harmonic extension of $\xi/R$ to $B$, namely
\begin{equation}
\label{eq:h-xi}
\begin{cases}
    \Delta h_\xi=0 & \text{in }B,\\
    h_\xi=\xi/R & \text{on }\partial B.
\end{cases}
\end{equation}
We define
\begin{equation}
\label{eq:Psi-xi}
    \Psi_\xi(x):=(1+h_\xi(x))x.
\end{equation}
Since
\[
    \Psi_\xi(x)
    =
    x+\xi(x)\nu_B(x)
    \qquad\text{for }x\in\partial B,
\]
one has $\Psi_\xi(\partial B)=\partial B^\xi$. Moreover, for
$\|\xi\|_{C^{2,\gamma}(\partial B)}$ sufficiently small,
$\Psi_\xi$ is a $C^{2,\gamma}$-diffeomorphism from $\overline B$ onto
$\overline{B^\xi}$; see, for instance, \cite[Chapter~5]{HP18shape}.

We set
\[
    \widehat u_\xi:=u_\xi\circ\Psi_\xi,
    \qquad
    J_\xi:=\det D\Psi_\xi,
\]
and
\[
    A_\xi
    :=
    J_\xi(D\Psi_\xi)^{-1}(D\Psi_\xi)^{-T}.
\]
We also denote by $J_\xi^\tau$ the tangential Jacobian of
$\Psi_\xi|_{\partial B}$. Changing variables in the weak formulation
of the Robin eigenvalue problem on $B^\xi$, we obtain
\begin{equation}
\label{eq:pullback-weak}
    \int_B
    A_\xi\nabla\widehat u_\xi\cdot\nabla\varphi\,dx
    +
    \beta\int_{\partial B}
    J_\xi^\tau\widehat u_\xi\varphi\,d\Hh^{n-1}
    =
    \lambda_\xi
    \int_BJ_\xi\widehat u_\xi\varphi\,dx
\end{equation}
for every $\varphi\in H^1(B)$. The normalization of $u_\xi$ becomes
\begin{equation}
\label{eq:pullback-normalization}
    \int_BJ_\xi\widehat u_\xi^2\,dx=1.
\end{equation}

We first collect the estimates on the change of variables that will be
used below. Standard estimates for the harmonic extension give
\begin{equation}
\label{eq:harmonic-extension-estimate}
    \|h_\xi\|_{H^1(B)}
    \leq
    C\|\xi\|_{H^{1/2}(\partial B)}
    \leq
    C\|\xi\|_{H^1(\partial B)};
\end{equation}
see, for instance, \cite{DL,AKN2023ACF}. Since
\[
    \Psi_\xi-I=h_\xi x,
    \qquad
    D\Psi_\xi-I=h_\xi I+x\otimes\nabla h_\xi,
\]
we also have
\begin{equation}
\label{eq:Psi-L2}
    \|\Psi_\xi-I\|_{L^2(B)}
    +
    \|D\Psi_\xi-I\|_{L^2(B)}
    \leq
    C\|\xi\|_{H^{1/2}(\partial B)}.
\end{equation}
On the other hand, the corresponding $C^1$ estimates for the harmonic
extension, together with the smooth dependence of the determinant and
the inverse matrix on $D\Psi_\xi$, ensure that
\begin{align}
 \|A_\xi-I\|_{L^\infty(B)}
 +\|J_\xi-1\|_{L^\infty(B)}
 &\leq
 C\|\xi\|_{C^{2,\gamma}(\partial B)},
 \label{eq:coeff-infty}\\
 \|A_\xi-I\|_{L^2(B)}
 +\|J_\xi-1\|_{L^2(B)}
 &\leq
 C\|\xi\|_{H^{1/2}(\partial B)},
 \label{eq:coeff-L2}\\
 \|J_\xi^\tau-1\|_{L^2(\partial B)}
 &\leq
 C\|\xi\|_{H^1(\partial B)}.
 \label{eq:surface-L2}
\end{align}
In particular, for sufficiently small
$\|\xi\|_{C^{2,\gamma}(\partial B)}$, the matrices $A_\xi$ are
uniformly elliptic and the maps $\Psi_\xi$ and $\Psi_\xi^{-1}$ have
uniformly bounded $C^1$ norms.

We will also use uniform regularity of the corresponding
eigenfunctions. After pulling the equation back to the fixed ball,
the coefficients in \eqref{eq:pullback-weak} remain in a bounded
$C^{1,\gamma}$-neighbourhood of the coefficients of the Robin problem
on $B$. Standard elliptic bootstrap, followed by the global Schauder
estimate for oblique derivative problems
\cite[Theorem~6.30]{GT}, therefore gives
\begin{equation}
\label{eq:uniform-regularity}
    \|\widehat u_\xi\|_{C^{2,\gamma}(\overline B)}
    +
    \|u_\xi\|_{C^{2,\gamma}(\overline{B^\xi})}
    \leq C,
\end{equation}
where $C$ is uniform for $\|\xi\|_{C^{2,\gamma}(\partial B)}$ sufficiently small.

For later use, we also record the convergence of the pulled-back
eigenfunctions. Indeed, let $\xi_k\to0$ in
$C^{2,\gamma}(\partial B)$. By \eqref{eq:uniform-regularity} and the
compact embedding of $C^{2,\gamma}(\overline B)$ into
$C^1(\overline B)$, the sequence $\widehat u_{\xi_k}$ is precompact in $C^1(\overline B)$.
The variational characterization of the first eigenvalue
also gives $\lambda_{\xi_k}\to\lambda_B$. Passing to the limit in
\eqref{eq:pullback-weak} and
\eqref{eq:pullback-normalization}, we conclude that every cluster point is
a nonnegative, $L^2$-normalized first Robin eigenfunction of $B$.
By simplicity of the first eigenvalue, every cluster point coincides with $u_B$. Since the
limit is independent of the subsequence, we have proved that
\begin{equation}
\label{eq:C1-convergence}
    \widehat u_\xi\longrightarrow u_B
    \qquad\text{in }C^1(\overline B)
\end{equation}
as $\|\xi\|_{C^{2,\gamma}(\partial B)}\to0$.

We can now prove the comparison estimate on the fixed ball.

\begin{proposition}
\label{prop:pullback-H1}Let \(B=B_R(0)\) be the ball of volume \(V\), and let \(\widehat u_\xi=u_\xi\circ\Psi_\xi\).
There exist $\varepsilon_0=\varepsilon_0(n,\beta,V,\gamma)>0$ and
$C=C(n,\beta,V,\gamma)>0$ such that, if
\[
    \|\xi\|_{C^{2,\gamma}(\partial B)}
    \leq\varepsilon_0,
\]
then
\begin{equation}
\label{eq:pullback-H1}
    \|\widehat u_\xi-u_B\|_{H^1(B)}
    \leq
    C\|\xi\|_{H^1(\partial B)}.
\end{equation}
Consequently, under the assumptions of \autoref{lem:local-coercivity} ,
\begin{equation}
\label{eq:pullback-deficit}
    \|u_\xi\circ\Psi_\xi-u_B\|_{H^1(B)}^2
    \leq
    C\bigl(\lambda_\xi-\lambda_B\bigr).
\end{equation}
\end{proposition}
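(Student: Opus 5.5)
We prove \eqref{eq:pullback-H1} by a linear perturbation argument on the fixed ball. The second statement \eqref{eq:pullback-deficit} then follows at once: for $B^\xi\in\mathcal S_{\eta_0,\gamma}(B)$ with $\eta_0\le\varepsilon_0$, \autoref{lem:local-coercivity} gives $\|\xi\|_{H^1(\partial B)}^2\le c^{-1}(\lambda_\xi-\lambda_B)$.

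Set $w:=\widehat u_\xi-u_B$. Subtracting the weak formulation of the Robin problem on $B$ from \eqref{eq:pullback-weak}, for every $\varphi\in H^1(B)$ we obtain
\[
\int_B\nabla w\cdot\nabla\varphi\,dx+\beta\int_{\partial B}w\varphi\,d\Hh^{n-1}-\lambda_B\int_Bw\varphi\,dx=\langle F_\xi,\varphi\rangle,
\]
where
\[
\langle F_\xi,\varphi\rangle=-\int_B(A_\xi-I)\nabla\widehat u_\xi\cdot\nabla\varphi\,dx-\beta\int_{\partial B}(J_\xi^\tau-1)\widehat u_\xi\varphi\,d\Hh^{n-1}+\int_B(\lambda_\xi J_\xi-\lambda_B)\widehat u_\xi\varphi\,dx.
\]
The uniform bound \eqref{eq:uniform-regularity} makes $\widehat u_\xi$ and $\nabla\widehat u_\xi$ bounded in $L^\infty$. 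Together with \eqref{eq:coeff-L2}, \eqref{eq:surface-L2} and the trace inequality, this gives
\[
\|F_\xi\|_{(H^1)^*}\le C\bigl(\|\xi\|_{H^1(\partial B)}+|\lambda_\xi-\lambda_B|\bigr).
\]

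Next we bound $|\lambda_\xi-\lambda_B|$. Testing \eqref{eq:pullback-weak} with $u_B$ and the ball equation with $\widehat u_\xi$, and subtracting, gives
\[
(\lambda_\xi-\lambda_B)\int_B\widehat u_\xi u_B\,dx=(\text{terms with }A_\xi-I,\ J_\xi^\tau-1,\ J_\xi-1)=O\bigl(\|\xi\|_{H^1(\partial B)}\bigr).
\]
By \eqref{eq:C1-convergence}, $\int_B\widehat u_\xi u_B\,dx\ge 1/2$ for $\varepsilon_0$ small, so $|\lambda_\xi-\lambda_B|\le C\|\xi\|_{H^1(\partial B)}$.

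The main obstacle is that the operator on the left-hand side has kernel spanned by $u_B$, since $\lambda_B$ is simple. We therefore split $w=tu_B+w^\perp$ with $w^\perp\perp u_B$ in $L^2(B)$. Testing with $\varphi=w^\perp$ and using the spectral gap $\lambda_{2,\beta}(B)-\lambda_B>0$ yields coercivity on $u_B^\perp$:
\[
\|w^\perp\|_{H^1}^2\le C\bigl(\mathcal Q(w^\perp)-\lambda_B\|w^\perp\|_2^2\bigr)=C\langle F_\xi,w^\perp\rangle.
\]
Hence $\|w^\perp\|_{H^1}\le C\|\xi\|_{H^1(\partial B)}$. This requires the $H^1$ norm to be controlled by the Robin form plus $\|\cdot\|_{L^2}$, which holds since $\beta>0$.

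It remains to control the scalar $t$, which comes from the normalization \eqref{eq:pullback-normalization}. Write
\[
1=\int_BJ_\xi\widehat u_\xi^2\,dx=\int_B\widehat u_\xi^2\,dx+O\bigl(\|J_\xi-1\|_{L^2}\bigr).
\]
Using $\int_Bu_B^2\,dx=1$, this becomes
\[
\int_B\widehat u_\xi^2\,dx-1=2t+t^2+\|w^\perp\|_2^2=O\bigl(\|\xi\|_{H^{1/2}(\partial B)}\bigr).
\]
Since $|t|\le\|w\|_{L^2}\to0$ by \eqref{eq:C1-convergence}, we get $|t|\le C\|\xi\|_{H^1(\partial B)}$. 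Combining the bounds on $t$ and $w^\perp$ gives \eqref{eq:pullback-H1}.
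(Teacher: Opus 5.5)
Your proof is correct and follows essentially the paper's argument: your $w^\perp$ and $t$ are exactly the paper's $v_\xi=\widehat u_\xi-\alpha_\xi u_B$ and $\alpha_\xi-1$, and you control them by the same spectral-gap coercivity on $u_B^\perp$ and the same normalization identity. The only difference is that you linearize around $\lambda_B$, which forces the extra cross-testing bound $|\lambda_\xi-\lambda_B|\le C\|\xi\|_{H^1(\partial B)}$ (which you carry out correctly); the paper instead keeps $\lambda_\xi$ in the identity tested with $v_\xi$ and only needs the qualitative fact that $\lambda_\xi\to\lambda_B<\lambda_{2,\beta}(B)$.
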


\begin{proof}
Let
\[
    a_B(v,w)
    :=
    \int_B\nabla v\cdot\nabla w\,dx
    +
    \beta\int_{\partial B}vw\,d\Hh^{n-1}.
\]
We recall that $a_B$ is a scalar product equivalent to the standard one in $H^1(B)$.

We decompose
$\widehat u_\xi$ according to the orthogonal decomposition
\[
    L^2(B)
    =
    \operatorname{span}\{u_B\}
    \oplus
    \{u_B\}^{\perp}.
\]
More precisely, set
\begin{equation}
\label{eq:alpha-v-decomposition}
    \alpha_\xi
    :=
    \int_B\widehat u_\xi u_B\,dx,
    \qquad
    v_\xi
    :=
    \widehat u_\xi-\alpha_\xi u_B.
\end{equation}
Then
\begin{equation}
\label{eq:v-orthogonal}
    \int_Bv_\xi u_B\,dx=0,
    \qquad
    \widehat u_\xi=\alpha_\xi u_B+v_\xi.
\end{equation}
Notice also that $\alpha_\xi>0$, since both $u_B$ and
$\widehat u_\xi$ are positive.

Let $\lambda_{2,\beta}(B)$ denote the second Robin eigenvalue of $B$.
By the variational characterization of $\lambda_{2,\beta}(B)$ and
\eqref{eq:v-orthogonal},
\begin{equation}
\label{eq:second-eigenvalue}
    \lambda_{2,\beta}(B)
    \|v_\xi\|_{L^2(B)}^2
    \leq
    a_B(v_\xi,v_\xi).
\end{equation}
Moreover,
\[
    a_B(u_B,v_\xi)
    =
    \lambda_B\int_Bu_Bv_\xi\,dx
    =
    0,
\]
and therefore
\[
    a_B(\widehat u_\xi,v_\xi)
    =
    a_B(v_\xi,v_\xi).
\]
Taking $\varphi=v_\xi$ in \eqref{eq:pullback-weak} and adding and
subtracting the coefficients of the unperturbed problem, we obtain
\begin{align}
a_B(v_\xi,v_\xi)
={}&
\lambda_\xi\|v_\xi\|_{L^2(B)}^2
+
\lambda_\xi
\int_B(J_\xi-1)\widehat u_\xi v_\xi\,dx
\notag\\
&+
\int_B(I-A_\xi)
\nabla\widehat u_\xi\cdot\nabla v_\xi\,dx
\notag\\
&+
\beta\int_{\partial B}
(1-J_\xi^\tau)
\widehat u_\xi v_\xi\,d\Hh^{n-1}.
\label{eq:spectral-gap-identity}
\end{align}

We estimate the last three terms. By
\eqref{eq:coeff-L2}, \eqref{eq:surface-L2},
\eqref{eq:uniform-regularity}, and the trace inequality,
\begin{align*}
\left|
\lambda_\xi
\int_B(J_\xi-1)\widehat u_\xi v_\xi\,dx
\right|
&\leq
C\|\xi\|_{H^1(\partial B)}
\|v_\xi\|_{H^1(B)},\\
\left|
\int_B(I-A_\xi)
\nabla\widehat u_\xi\cdot\nabla v_\xi\,dx
\right|
&\leq
C\|\xi\|_{H^1(\partial B)}
\|v_\xi\|_{H^1(B)},\\
\left|
\beta\int_{\partial B}
(1-J_\xi^\tau)
\widehat u_\xi v_\xi\,d\Hh^{n-1}
\right|
&\leq
C\|\xi\|_{H^1(\partial B)}
\|v_\xi\|_{H^1(B)}.
\end{align*}
Hence \eqref{eq:spectral-gap-identity} gives
\begin{equation}
\label{eq:v-energy-pre}
    a_B(v_\xi,v_\xi)
    \leq
    \lambda_\xi\|v_\xi\|_{L^2(B)}^2
    +
    C\|\xi\|_{H^1(\partial B)}
    \|v_\xi\|_{H^1(B)}.
\end{equation}

By \eqref{eq:second-eigenvalue},
\[
    \lambda_\xi\|v_\xi\|_{L^2(B)}^2
    \leq
    \frac{\lambda_\xi}{\lambda_{2,\beta}(B)}
    a_B(v_\xi,v_\xi).
\]
Since $\lambda_\xi\to\lambda_B$ and
\[
    \lambda_B<\lambda_{2,\beta}(B),
\]
there exists $\varepsilon_0>0$ such that
\[
    1-\frac{\lambda_\xi}{\lambda_{2,\beta}(B)}
    \geq c>0
\]
whenever
$\|\xi\|_{C^{2,\gamma}(\partial B)}\leq\varepsilon_0$.
Using this in \eqref{eq:v-energy-pre}, together with the equivalence
between $a_B(v,v)$ and $\|v\|_{H^1(B)}^2$, we obtain
\begin{equation}
\label{eq:vxi-bound}
    \|v_\xi\|_{H^1(B)}
    \leq
    C\|\xi\|_{H^1(\partial B)}.
\end{equation}

It remains to estimate the component of $\widehat u_\xi$ in the
direction of $u_B$. From \eqref{eq:pullback-normalization} and the
orthogonal decomposition \eqref{eq:v-orthogonal},
\begin{align*}
1
&=
\int_BJ_\xi\widehat u_\xi^2\,dx\\
&=
\int_B\widehat u_\xi^2\,dx
+
\int_B(J_\xi-1)\widehat u_\xi^2\,dx\\
&=
\alpha_\xi^2
+
\|v_\xi\|_{L^2(B)}^2
+
\int_B(J_\xi-1)\widehat u_\xi^2\,dx.
\end{align*}
Thus, by \eqref{eq:coeff-L2},
\eqref{eq:uniform-regularity}, and
\eqref{eq:vxi-bound},
\[
    |1-\alpha_\xi^2|
    \leq
    \|v_\xi\|_{L^2(B)}^2
    +
    C\|J_\xi-1\|_{L^2(B)}
    \leq
    C\|\xi\|_{H^1(\partial B)}.
\]
Since $\alpha_\xi>0$,
\[
    |1-\alpha_\xi|
    =
    \frac{|1-\alpha_\xi^2|}{1+\alpha_\xi}
    \leq
    C\|\xi\|_{H^1(\partial B)}.
\]
Finally,
\[
    \widehat u_\xi-u_B
    =
    (\alpha_\xi-1)u_B+v_\xi,
\]
and therefore \eqref{eq:vxi-bound} yields
\[
    \|\widehat u_\xi-u_B\|_{H^1(B)}
    \leq
    C\|\xi\|_{H^1(\partial B)}.
\]
This proves \eqref{eq:pullback-H1}. Combining it with
\eqref{eq:local-coercivity} gives
\eqref{eq:pullback-deficit}.
\end{proof}

We now compare the eigenfunctions directly on $B\cap B^\xi$.

\begin{proposition}
    \label{prop:original-coordinates}
There exist $\varepsilon_0=\varepsilon_0(n,\beta,V,\gamma)>0$ and
$C=C(n,\beta,V,\gamma)>0$ such that, if
\[
    \|\xi\|_{C^{2,\gamma}(\partial B)}
    \leq\varepsilon_0,
\]
then
\begin{equation}
\label{eq:intersection-H1}
    \|u_\xi-u_B\|_{H^1(B^\xi\cap B)}
    \leq
    C\|\xi\|_{H^1(\partial B)}.
\end{equation}
Consequently, under the assumptions of
\eqref{eq:local-coercivity},
\begin{equation}
\label{eq:intersection-deficit}
    \|u_\xi-u_B\|_{H^1(B^\xi\cap B)}^2
    \leq
    C\bigl(\lambda_\xi-\lambda_B\bigr).
\end{equation}
\end{proposition}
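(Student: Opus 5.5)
We derive \eqref{eq:intersection-H1} from \autoref{prop:pullback-H1} by going back to original coordinates on $B^\xi\cap B$. On $B^\xi\cap B$ we split
\[
    u_\xi-u_B
    =
    \bigl(\widehat u_\xi-u_B\bigr)\circ\Psi_\xi^{-1}
    +
    \bigl(u_B\circ\Psi_\xi^{-1}-u_B\bigr),
\]
and estimate each piece in $H^1(B^\xi\cap B)$. Since $B^\xi\cap B\subset\Psi_\xi(B)=B^\xi$, the first piece is bounded by its $H^1(B^\xi)$ norm. Changing variables back through $\Psi_\xi$ and using the uniform bounds on $D\Psi_\xi$, $D\Psi_\xi^{-1}$ and $J_\xi$ from \eqref{eq:coeff-infty}, this is at most $C\|\widehat u_\xi-u_B\|_{H^1(B)}$. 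By \eqref{eq:pullback-H1} it is therefore bounded by $C\|\xi\|_{H^1(\partial B)}$.

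The second piece is where the work lies, because $u_B$ and $u_B\circ\Psi_\xi^{-1}$ are compared at different points. We use the fact that $u_B=\phi(|x|)$ is smooth on a neighbourhood of $\overline B$: it solves the radial ODE, which extends past $R$. We may therefore use a smooth extension $\tilde u_B$ of $u_B$ to $B_{2R}$, with $\|\tilde u_B\|_{C^2}\le C$. Writing $y=\Psi_\xi(x)$ for $x\in\Psi_\xi^{-1}(B^\xi\cap B)\subset B$, the mean value theorem gives pointwise
\[
    |\tilde u_B(x)-\tilde u_B(y)|\le C|x-y|=C|h_\xi(x)||x|,
\]
and, for the gradients, $\nabla(u_B\circ\Psi_\xi^{-1})(y)=D\Psi_\xi(x)^{-T}\nabla u_B(x)$, so that
\[
    \bigl|\nabla(u_B\circ\Psi_\xi^{-1})(y)-\nabla u_B(y)\bigr|
    \le C\bigl(|D\Psi_\xi(x)-I|+|x-y|\bigr).
\]
Integrating over $B^\xi\cap B$ and changing variables again, with bounded Jacobian, we obtain the bound
\[
    C\bigl(\|\Psi_\xi-I\|_{L^2(B)}+\|D\Psi_\xi-I\|_{L^2(B)}\bigr)
    \le C\|\xi\|_{H^{1/2}(\partial B)}
\]
by \eqref{eq:Psi-L2}. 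This is at most $C\|\xi\|_{H^1(\partial B)}$.

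We expect the only delicate point to be justifying the use of the extension. The map $x\mapsto x+t(y-x)$ may leave $B$ when $x$ is near $\partial B$, so the mean value argument needs $\tilde u_B$ rather than $u_B$. Since $|x-y|\le C\eta_0$ and we evaluate only at points of $B$, where $\tilde u_B=u_B$, this causes no difficulty. Adding the two pieces proves \eqref{eq:intersection-H1}. Squaring it and combining with \autoref{lem:local-coercivity} (taking $\varepsilon_0\le\eta_0$) gives \eqref{eq:intersection-deficit}.
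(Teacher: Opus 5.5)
Your proof is correct, but it uses the mirror image of the paper's decomposition.

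\textbf{The paper's route.} The paper inserts $\widehat u_\xi=u_\xi\circ\Psi_\xi$ as the intermediate function. It bounds $\|u_\xi-\widehat u_\xi\|_{H^1(B^\xi\cap B)}$ pointwise by $C(|h_\xi|+|\nabla h_\xi|)$, using the uniform $C^{2,\gamma}$ bound on $u_\xi$ in $\overline{B^\xi}$ from \eqref{eq:uniform-regularity}. It then controls $\|\widehat u_\xi-u_B\|_{H^1(B^\xi\cap B)}$ by restricting \eqref{eq:pullback-H1} from $B$ to the subset $B^\xi\cap B$, with no change of variables.

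\textbf{Your route.} You insert $u_B\circ\Psi_\xi^{-1}$ instead. The transport error then falls on the fixed, explicitly smooth radial function $u_B$ rather than on $u_\xi$. The cost is that you must push $\widehat u_\xi-u_B$ forward through $\Psi_\xi^{-1}$, using the uniform bounds on $D\Psi_\xi$, $(D\Psi_\xi)^{-1}$ and $J_\xi$. Both routes bound the transport term by $C\|\xi\|_{H^{1/2}(\partial B)}$ via \eqref{eq:Psi-L2}.

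\textbf{What each buys.} Your version needs no regularity of $u_\xi$ in the original coordinates for this step, although \eqref{eq:uniform-regularity} still enters through \autoref{prop:pullback-H1}. Your mean value step is also cleaner. The paper tacitly needs the radial segment from $x$ to $\Psi_\xi(x)$ to stay inside $B^\xi$, which holds because $B^\xi$ is star-shaped about the origin. In your setting both $x=\Psi_\xi^{-1}(y)$ and $y$ lie in the convex set $B$, so the segment joining them lies in $B$. Hence the extension $\tilde u_B$ you introduce is harmless but unnecessary, and the ``delicate point'' you flag does not arise.
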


\begin{proof}
We first compare $u_\xi$ with its pull-back
$\widehat u_\xi=u_\xi\circ\Psi_\xi$ on $B^\xi\cap B$.

Let $x\in B^\xi\cap B$. Since $x\in B$, $\Psi_\xi(x)\in B^\xi$,
and therefore both $u_\xi(x)$ and $u_\xi(\Psi_\xi(x))$ are well
defined. By \eqref{eq:uniform-regularity},
\begin{align}
|u_\xi(x)-\widehat u_\xi(x)|
&=
|u_\xi(x)-u_\xi(\Psi_\xi(x))|
\notag\\
&\leq
\|\nabla u_\xi\|_{L^\infty(B^\xi)}
|\Psi_\xi(x)-x|
\notag\\
&\leq
C|h_\xi(x)|.
\label{eq:pointwise-function-comparison}
\end{align}
Here we used
\[
    \Psi_\xi(x)-x=h_\xi(x)x
\]
and the fact that $|x|\leq R$.

We argue similarly for the gradients. By the chain rule,
\[
    \nabla\widehat u_\xi(x)
    =
    D\Psi_\xi(x)^T
    \nabla u_\xi(\Psi_\xi(x)).
\]
Hence
\begin{align}
|\nabla u_\xi(x)-\nabla\widehat u_\xi(x)|
\leq{}&
|\nabla u_\xi(x)
-\nabla u_\xi(\Psi_\xi(x))|
\notag\\
&+
|(I-D\Psi_\xi(x)^T)
\nabla u_\xi(\Psi_\xi(x))|.
\label{eq:gradient-splitting-pointwise}
\end{align}
Using again \eqref{eq:uniform-regularity},
\[
    |\nabla u_\xi(x)
    -\nabla u_\xi(\Psi_\xi(x))|
    \leq
    C|\Psi_\xi(x)-x|,
\]
while
\[
    |(I-D\Psi_\xi(x)^T)
    \nabla u_\xi(\Psi_\xi(x))|
    \leq
    C|D\Psi_\xi(x)-I|.
\]
Since
\[
    |\Psi_\xi(x)-x|
    \leq C|h_\xi(x)|
\]
and
\[
    |D\Psi_\xi(x)-I|
    \leq
    C\bigl(|h_\xi(x)|+|\nabla h_\xi(x)|\bigr),
\]
we conclude that
\begin{equation}
\label{eq:pointwise-gradient-comparison}
    |\nabla u_\xi(x)-\nabla\widehat u_\xi(x)|
    \leq
    C\bigl(
        |h_\xi(x)|+|\nabla h_\xi(x)|
    \bigr)
\end{equation}
for every $x\in B^\xi\cap B$.

We now integrate the pointwise estimates. From
\eqref{eq:pointwise-function-comparison} and
\eqref{eq:harmonic-extension-estimate},
\[
\begin{aligned}
    \|u_\xi-\widehat u_\xi\|_{L^2(B^\xi\cap B)}
    &\leq
    C\|h_\xi\|_{L^2(B)}\leq
    C\|\xi\|_{H^{1/2}(\partial B)}.
\end{aligned}
\]
Similarly, \eqref{eq:pointwise-gradient-comparison} gives
\[
\begin{aligned}
    \|\nabla u_\xi-\nabla\widehat u_\xi\|_{L^2(B^\xi\cap B)}
    &\leq
    C\|h_\xi\|_{H^1(B)}\leq
    C\|\xi\|_{H^{1/2}(\partial B)}.
\end{aligned}
\]
Therefore
\begin{equation}
\label{eq:u-hatu-comparison}
    \|u_\xi-\widehat u_\xi\|_{H^1(B^\xi\cap B)}
    \leq
    C\|\xi\|_{H^{1/2}(\partial B)}
    \leq
    C\|\xi\|_{H^1(\partial B)}.
\end{equation}

Finally, by the triangle inequality,
\[
\|u_\xi-u_B\|_{H^1(B^\xi\cap B)}
\leq
\|u_\xi-\widehat u_\xi\|_{H^1(B^\xi\cap B)}
+
\|\widehat u_\xi-u_B\|_{H^1(B^\xi\cap B)}\leq
    C\|\xi\|_{H^1(\partial B)},
\]
which proves \eqref{eq:intersection-H1}. The estimate
\eqref{eq:intersection-deficit} follows immediately from
\eqref{eq:local-coercivity}.
\end{proof}

Combining \eqref{eq:pullback-deficit} and
\eqref{eq:intersection-deficit}, we finally obtain
\[
    \|u_\xi\circ\Psi_\xi-u_B\|_{H^1(B)}^2
    +
    \|u_\xi-u_B\|_{H^1(B^\xi\cap B)}^2
    \leq
    C\bigl(\lambda_\xi-\lambda_B\bigr),
\]
which proves \autoref{thm:nearly-spherical}.
\bibliographystyle{plain}
\bibliography{robin_eigenfunction_stability_restructured}

@book{GT,
  author = {Gilbarg, David and Trudinger, Neil S.},
  title = {Elliptic Partial Differential Equations of Second Order},
  series = {Grundlehren der mathematischen Wissenschaften},
  volume = {224},
  publisher = {Springer},
  address = {Berlin--Heidelberg},
  year = {1977},
  doi = {10.1007/978-3-642-96379-7}
}

@article{Faber1923,
  author = {Faber, Georg},
  title = {Beweis, da{\ss} unter allen homogenen Membranen von gleicher Fl{\"a}che und gleicher Spannung die kreisf{\"o}rmige den tiefsten Grundton gibt},
  journal = {Sitzungsberichte der Bayerischen Akademie der Wissenschaften, Mathematisch-physikalische Klasse},
  pages = {169--172},
  year = {1923}
}

@book{HP18shape,
  author = {Henrot, Antoine and Pierre, Michel},
  title = {Shape Variation and Optimization: A Geometrical Analysis},
  series = {EMS Tracts in Mathematics},
  volume = {28},
  publisher = {European Mathematical Society},
  year = {2018},
  doi = {10.4171/178},
  isbn = {9783037191781}
}

@article{DRELICHMAN2008286,
  author = {Drelichman, Irene and Dur{\'a}n, Ricardo G.},
  title = {Improved {Poincar\'e} inequalities with weights},
  journal = {Journal of Mathematical Analysis and Applications},
  volume = {347},
  number = {1},
  pages = {286--293},
  year = {2008},
  doi = {10.1016/j.jmaa.2008.06.005}
}

@article{Krahn1925,
  author = {Krahn, Edgar},
  title = {{\"U}ber eine von {Rayleigh} formulierte Minimaleigenschaft des Kreises},
  journal = {Mathematische Annalen},
  volume = {94},
  pages = {97--100},
  year = {1925}
}

@article{Talenti1976,
  author = {Talenti, Giorgio},
  title = {Elliptic equations and rearrangements},
  journal = {Annali della Scuola Normale Superiore di Pisa, Classe di Scienze (4)},
  volume = {3},
  number = {4},
  pages = {697--718},
  year = {1976}
}

@article{Melas1992,
  author = {Melas, Antonios D.},
  title = {The stability of some eigenvalue estimates},
  journal = {Journal of Differential Geometry},
  volume = {36},
  number = {1},
  pages = {19--33},
  year = {1992}
}

@article{FMP2009,
  author = {Fusco, Nicola and Maggi, Francesco and Pratelli, Aldo},
  title = {Stability estimates for certain {Faber--Krahn}, isocapacitary and {Cheeger} inequalities},
  journal = {Annali della Scuola Normale Superiore di Pisa, Classe di Scienze (5)},
  volume = {8},
  number = {1},
  pages = {51--71},
  year = {2009}
}

@article{BDV2015,
  author = {Brasco, Lorenzo and De Philippis, Guido and Velichkov, Bozhidar},
  title = {{Faber--Krahn} inequalities in sharp quantitative form},
  journal = {Duke Mathematical Journal},
  volume = {164},
  number = {9},
  pages = {1777--1831},
  year = {2015},
  doi = {10.1215/00127094-3120167}
}

@incollection{BrascoDePhilippis2017,
  author = {Brasco, Lorenzo and De Philippis, Guido},
  title = {Spectral inequalities in quantitative form},
  booktitle = {Shape Optimization and Spectral Theory},
  editor = {Henrot, Antoine},
  pages = {201--281},
  publisher = {De Gruyter Open},
  year = {2017}
}

@article{CicaleseLeonardi2012,
  author = {Cicalese, Marco and Leonardi, Gian Paolo},
  title = {A selection principle for the sharp quantitative isoperimetric inequality},
  journal = {Archive for Rational Mechanics and Analysis},
  volume = {206},
  number = {2},
  pages = {617--643},
  year = {2012},
  doi = {10.1007/s00205-012-0544-1}
}

@article{KohlerJobin1978,
  author = {Kohler-Jobin, Marie-Th{\'e}r{\`e}se},
  title = {Une m{\'e}thode de comparaison isop{\'e}rim{\'e}trique de fonctionnelles de domaines de la physique math{\'e}matique. {I}. Une d{\'e}monstration de la conjecture isop{\'e}rim{\'e}trique {$P\lambda^2\ge\pi j_0^4/2$} de {P{\'o}lya} et {Szeg{\H{o}}}},
  journal = {Zeitschrift f{\"u}r angewandte Mathematik und Physik},
  volume = {29},
  number = {5},
  pages = {757--766},
  year = {1978}
}

@article{KohlerJobin1982,
  author = {Kohler-Jobin, Marie-Th{\'e}r{\`e}se},
  title = {Symmetrization with equal {Dirichlet} integrals},
  journal = {SIAM Journal on Mathematical Analysis},
  volume = {13},
  number = {1},
  pages = {153--161},
  year = {1982}
}

@article{AKN2023ACF,
  author = {Allen, Mark and Kriventsov, Dennis and Neumayer, Robin},
  title = {Sharp quantitative {Faber--Krahn} inequalities and the {Alt--Caffarelli--Friedman} monotonicity formula},
  journal = {Ars Inveniendi Analytica},
  year = {2023},
  doi = {10.15781/e7f3-a487},
  note = {Paper No.~1, 49 pp.}
}

@article{AKN2023Linear,
  author = {Allen, Mark and Kriventsov, Dennis and Neumayer, Robin},
  title = {Linear stability implies nonlinear stability for {Faber--Krahn} type inequalities},
  journal = {Interfaces and Free Boundaries},
  volume = {25},
  number = {2},
  pages = {217--324},
  year = {2023}
}

@misc{AKN,
  author = {Allen, Mark and Kriventsov, Dennis and Neumayer, Robin},
  title = {Quantitative Resolvent and Eigenfunction Stability for the {Faber--Krahn} Inequality},
  year = {2025},
  howpublished = {Preprint},
  note = {\href{https://arxiv.org/abs/2504.13053v2}{arXiv:2504.13053v2}}
}

@article{Bossel1986,
  author = {Bossel, Marie-H{\'e}l{\`e}ne},
  title = {Membranes {\'e}lastiquement li{\'e}es: extension du th{\'e}or{\`e}me de {Rayleigh--Faber--Krahn} et de l'in{\'e}galit{\'e} de {Cheeger}},
  journal = {Comptes Rendus de l'Acad{\'e}mie des Sciences de Paris, S{\'e}rie I},
  volume = {302},
  number = {1},
  pages = {47--50},
  year = {1986}
}

@article{Bossel1988,
  author = {Bossel, Marie-H{\'e}l{\`e}ne},
  title = {Membranes {\'e}lastiquement li{\'e}es inhomog{\`e}nes ou sur une surface: une nouvelle extension du th{\'e}or{\`e}me isop{\'e}rim{\'e}trique de {Rayleigh--Faber--Krahn}},
  journal = {Zeitschrift f{\"u}r angewandte Mathematik und Physik},
  volume = {39},
  number = {5},
  pages = {733--742},
  year = {1988},
  doi = {10.1007/BF00948733}
}

@article{Daners2006,
  author = {Daners, Daniel},
  title = {A {Faber--Krahn} inequality for {Robin} problems in any space dimension},
  journal = {Mathematische Annalen},
  volume = {335},
  pages = {767--785},
  year = {2006},
  doi = {10.1007/s00208-006-0753-8}
}

@article{DanersKennedy2007,
  author = {Daners, Daniel and Kennedy, James},
  title = {Uniqueness in the {Faber--Krahn} inequality for {Robin} problems},
  journal = {SIAM Journal on Mathematical Analysis},
  volume = {39},
  pages = {1191--1207},
  year = {2007}
}

@article{BucurDaners2010,
  author = {Bucur, Dorin and Daners, Daniel},
  title = {An alternative approach to the {Faber--Krahn} inequality for {Robin} problems},
  journal = {Calculus of Variations and Partial Differential Equations},
  volume = {37},
  pages = {75--86},
  year = {2010}
}

@article{BucurGiacomini2010,
  author = {Bucur, Dorin and Giacomini, Alessandro},
  title = {A variational approach to the isoperimetric inequality for the {Robin} eigenvalue problem},
  journal = {Archive for Rational Mechanics and Analysis},
  volume = {198},
  pages = {927--961},
  year = {2010},
  doi = {10.1007/s00205-010-0298-6}
}

@article{BucurGiacomini2015,
  author = {Bucur, Dorin and Giacomini, Alessandro},
  title = {{Faber--Krahn} inequalities for the {Robin--Laplacian}: a free discontinuity approach},
  journal = {Archive for Rational Mechanics and Analysis},
  volume = {218},
  pages = {757--824},
  year = {2015},
  doi = {10.1007/s00205-015-0872-z}
}

@book{AmbrosioFuscoPallara2000,
  author = {Ambrosio, Luigi and Fusco, Nicola and Pallara, Diego},
  title = {Functions of Bounded Variation and Free Discontinuity Problems},
  publisher = {Oxford University Press},
  year = {2000}
}

@article{BucurFeroneNitschTrombetti2018,
  author = {Bucur, Dorin and Ferone, Vincenzo and Nitsch, Carlo and Trombetti, Cristina},
  title = {The quantitative {Faber--Krahn} inequality for the {Robin} {Laplacian}},
  journal = {Journal of Differential Equations},
  volume = {264},
  number = {7},
  pages = {4488--4503},
  year = {2018},
  doi = {10.1016/j.jde.2017.12.014},
  note = {Numbered references in the text use \href{https://arxiv.org/abs/1611.06704}{arXiv:1611.06704} (2016)}
}

@article{BGN2021,
  author = {Bucur, Dorin and Giacomini, Alessandro and Nahon, Micka{\"e}l},
  title = {Degenerate free discontinuity problems and spectral inequalities in quantitative form},
  journal = {Archive for Rational Mechanics and Analysis},
  volume = {242},
  number = {1},
  pages = {453--483},
  year = {2021},
  doi = {10.1007/s00205-021-01688-7}
}

@article{BW,
  author = {Bandle, Catherine and Wagner, Alfred},
  title = {Second domain variation for problems with {Robin} boundary conditions},
  journal = {Journal of Optimization Theory and Applications},
  volume = {167},
  pages = {430--463},
  year = {2015},
  doi = {10.1007/s10957-015-0801-1},
  note = {Section and equation numbers in the text refer to \href{https://arxiv.org/abs/1403.2220v2}{arXiv:1403.2220v2}}
}

@article{DL,
  author = {Dambrine, Marc and Lamboley, Jimmy},
  title = {Stability in shape optimization with second variation},
  journal = {Journal of Differential Equations},
  volume = {267},
  number = {5},
  pages = {3009--3045},
  year = {2019},
  doi = {10.1016/j.jde.2019.03.033},
  note = {Preprint: \href{https://arxiv.org/abs/1410.2586v4}{arXiv:1410.2586v4}}
}

@misc{CV,
  author = {Cristoforoni, Emanuele and Villone, Federico},
  title = {Symmetry-breaking and local stability of a two-phase eigenvalue problem in optimal insulation},
  year = {2026},
  howpublished = {Preprint},
  note = {\href{https://arxiv.org/abs/2607.07218v1}{arXiv:2607.07218v1}}
}

@article{AmatoBarbatoMasielloPaoli2024,
  author = {Amato, Vincenzo and Barbato, Rosa and Masiello, Alba Lia and Paoli, Gloria},
  title = {The {Talenti} comparison result in a quantitative form},
  journal = {Annali della Scuola Normale Superiore di Pisa, Classe di Scienze},
  year = {2024},
  doi = {10.2422/2036-2145.202404_010},
  eprint = {2311.18617},
  archiveprefix = {arXiv},
  note = {Published online 20 December 2024, 31 pp.}
}

@article{AmatoBarbatoCitoMasielloPaoli2026,
  author = {Amato, Vincenzo and Barbato, Rosa and Cito, Simone and Masiello, Alba Lia and Paoli, Gloria},
  title = {A quantitative {Talenti}-type comparison result with {Robin} boundary conditions},
  journal = {Annali di Matematica Pura ed Applicata},
  year = {2026},
  doi = {10.1007/s10231-026-01722-z},
  eprint = {2511.11316},
  archiveprefix = {arXiv},
  note = {Published online 1 August 2026}
}

@article{AL26,
  author = {Acampora, Paolo and Lamboley, Jimmy},
  title = {Sharp quantitative {Talenti} inequality in particular cases},
  journal = {Archive for Rational Mechanics and Analysis},
  volume = {250},
  year = {2026},
  doi = {10.1007/s00205-026-02179-3},
  note = {Article No.~24}
}

@article{AlvinoNitschTrombetti2023,
  author = {Alvino, Angelo and Nitsch, Carlo and Trombetti, Cristina},
  title = {A {Talenti} comparison result for solutions to elliptic problems with {Robin} boundary conditions},
  journal = {Communications on Pure and Applied Mathematics},
  volume = {76},
  number = {3},
  pages = {585--603},
  year = {2023},
  doi = {10.1002/cpa.22090}
}
\Addresses
\end{document}